\def\N{\mathbb{N}}
\def\Z{\mathbb{Z}}
\def\C{\mathbb{C}}
\def\R{\mathbb{R}}
\def\L{\mathcal{L}}
\def\G{\mathcal{G}}
\def\CB{\mathcal{CB}}
\def\a{\mathbf{a}}
\def\b{\mathbf{b}}
\def\c{\mathbf{c}}
\def\d{\mathbf{d}}
\def\O{\mathcal{O}}
\def\m{\mathbf{m}}
\DeclareMathOperator*{\res}{res}
\newtheorem{theorem}{\hspace*{\parindent}Theorem}
\newtheorem{propos}{\hspace*{\parindent}Proposition}
\newtheorem{lemma}{\hspace*{\parindent}Lemma}
\newtheorem{corollary}{\hspace*{\parindent}Corollary}
\title{A class of Meijer's $G$ functions and further representations of the generalized hypergeometric functions}
\author{D.B.\:Karp$^{\rm a,b}$\footnote{Corresponding author. E-mail: D.\:Karp -- \emph{dimkrp@gmail.com}, J.L.\:L\'{o}pez --  \emph{jl.lopez@unavarra.es}}~~and J.L.\:L\'{o}pez$^{\rm c}$
\\[10pt]\small{\textit{$\phantom{1}^a$Far Eastern Federal University, Vladivostok, Russia}}
\\
\small{\textit{$\phantom{1}^b$Institute of Applied Mathematics FEBRAS, Vladivostok, Russia}}
\\
\small{\textit{$\phantom{1}^c$Dpto. de Ingenier\'{\i}a Matem\'{a}tica e Inform\'{a}tica,}}
\\
\small{\textit{Universidad P\'{u}blica de Navarra and INAMAT, Navarra, Spain}}}
\date{}
\begin{document}

\maketitle

\begin{center}
\parbox{12cm}{
\small\textbf{Abstract.}  In this paper we investigate the Meijer's $G$ function $G^{p,1}_{p+1,p+1}$ which for certain parameter values represents the Riemann-Liouville fractional
integral of Meijer-N{\o}rlund function $G^{p,0}_{p,p}$. Our results for $G^{p,1}_{p+1,p+1}$ include: a regularization formula for overlapping poles,
a connection formula with the Meijer-N{\o}rlund function, asymptotic formulas around the origin and unity, formulas for the moments, a hypergeometric transform and a sign
stabilization theorem for growing parameters. We further employ the properties of $G^{p,1}_{p+1,p+1}$ to calculate the Hadamard finite part of an integral
containing the Meijer-N{\o}rlund function that is singular at unity.  In the ultimate section, we define an alternative regularization for such integral better suited
for representing the Bessel type generalized hypergeometric function ${}_{p-1}F_{p}$. A particular case of this regularization is then used to identify some new facts
about the positivity and reality of the zeros of this function.}
\end{center}

\bigskip

Keywords: \emph{Meijer's $G$ function, generalized hypergeometric function, integral representation, Hadamard finite part}

\bigskip

MSC2010: 33C60, 33C20, 26A33, 46F99

\bigskip

\section{Introduction}
Throughout the paper we will use the standard notation ${_{p}F_q}$  for the generalized hypergeometric function (see \cite[Section~2.1]{AAR}, \cite[Section~5.1]{LukeBook},
\cite[Sections 16.2-16.12]{NIST} or \cite[Chapter~12]{BealsWong})  and $G^{m,n}_{p,q}$ for the Meijer's $G$ function (see \cite[section~5.2]{LukeBook}, \cite[16.17]{NIST}, \cite[8.2]{PBM3}
or \cite[Chapter~12]{BealsWong}).  The role that the Meijer's $G$ function plays for integral representations of the generalized hypergeometric functions was probably first recognized by
V.\:Kiryakova in \cite[Chapter~4]{KiryakovaBook} and \cite{Kiryakova97}, through the use of successive fractional integrations. In a series of papers
\cite{KaKaAnMAth2017,KarpJMS,KPJMAA,KPITSF2017,KSJAT} the first author jointly with Kalmykov, Prilepkina and Sitnik extended Kiryakova's results and applied them to discover numerous new facts about the generalized hypergeometric functions.
This work has been continued in our recent article \cite{KLMeijer1}, where the properties of the Meijer-N{\o}rlund function $G^{p,0}_{p,p}$ were employed to
investigate the connections of the generalized hypergeometric functions with topics like  inverse factorial series, radial positive definite functions, Luke's inequalities and
zero-free regions.  We further extended the known representations to arbitrary values of parameters by studying a regularization of integrals of the form
\begin{equation}\label{eq:Gphiintegral}
\int_0^1\!\!G_0(t)\varphi(t)dt,~~\text{where}~~G_0(t)=G^{p,0}_{p,p}\!\left(t\left\vert\begin{matrix}\b-1\\\a-1\end{matrix}\right.\right)
\end{equation}
and $\varphi$ is a ''nice'' function (see details below). For certain values of the parameter vectors $\a=(a_1,\ldots,a_p)$ and $\b=(b_1,\ldots,b_p)$ the
function $G_0(t)$ has non-integrable power singularities at $t=0$ and/or $t=1$. The regularization from \cite[Section~5.2]{KLMeijer1} uses the Taylor coefficients
of the function $\varphi(t)$  at the point $t=0$ to overcome this divergence.

In this paper we continue our study of the representations of the generalized hypergeometric functions via integrals of Meijer's $G$ function begun in \cite{KLMeijer1}.
The main emphasis in this work will be made on the regularization of (\ref{eq:Gphiintegral}) based on the expansion of $\varphi(t)$ in the neighborhood of $t=1$.
This requires a detailed investigation of the properties of the  function $G^{p,1}_{p+1,p+1}$ which are less known than the properties of the Meijer-N{\o}rlund function $G^{p,0}_{p,p}$.
We conduct such investigation in Section~2.  In particular, we derive an identity relating  $G^{p,1}_{p+1,p+1}$ with $G^{p+1,0}_{p+1,p+1}$ (under a certain restriction on parameters),
a regularization formula for $G^{p,1}_{p+1,p+1}$ when the poles of the integrand of different types superimpose, an expression for the moments of the function $G^{p,1}_{p+1,p+1}$
and a formula for its hypergeometric transform which incorporates generalized Stieltjes, Laplace and Hankel transforms. Furthermore, we prove a proposition on sign stabilization
for $G^{p,1}_{p+1,p+1}$ when all but one parameters grow infinitely.  In Section~3 we utilize the new properties from Section~2 to define and study a regularization of the
integral (\ref{eq:Gphiintegral}) that uses Taylor coefficients of $\varphi(t)$ at the point $t=1$. Applying this regularization method to generalized Stieltjes, Laplace and
cosine Fourier kernels we obtain new integral representations of the generalized hypergeometric functions.  Finally, in the ultimate Section~4 we define an
alternative regularization of (\ref{eq:Gphiintegral}) tailored to better serve the generalized hypergeometric function of the Bessel type.  This approach leads to
new information about positivity and real zeros of this function also presented in the same section.

\section{New properties of the Meijer's function $G^{p,1}_{p+1,p+1}$}

Let us fix some notation and terminology first.  The standard symbols $\N$, $\Z$, $\R$ and $\C$ will be used to denote the natural, integer, real and complex numbers, respectively;
$\N_0=\N\cup\{0\}$.  In what follows we will use the shorthand notation  for the products and sums:
\begin{equation*}
\Gamma(\a)=\Gamma(a_1)\Gamma(a_2)\cdots\Gamma(a_p),~~(\a)_n=(a_1)_n(a_2)_n\cdots(a_p)_n,~~\a+\mu=(a_1+\mu,a_2+\mu,\dots,a_p+\mu);
\end{equation*}
inequalities like $\Re(\a)>0$ and properties like $-\a\notin\N_0$ will be understood element-wise (i.e. $-\a\notin\N_0$ means that
no element of $\a$ is non-positive integer). The key role in our construction will be played by the function
\begin{equation}\label{eq:Gn-defined}
\widehat{G}_n(t)\!:=\!G^{p,1}_{p+1,p+1}\!\!\left(\!t\left|\begin{array}{l}\!\!n,\b+n-1\!\!\\\!\!\a+n-1,0\!\!\end{array}\right.\!\right)
\!=\!\frac{1}{2\pi{i}}
\int\limits_{\mathcal{L}}\!\!\frac{\Gamma(\a+n-1+s)\Gamma(1-n\!-\!s)}{\Gamma(\b+n-1+s)\Gamma(1-s)}t^{-s}ds,
\end{equation}
where $\a=(a_1,\ldots,a_p)$, $\b=(b_1,\ldots,b_p)$ are (generally complex) parameter vectors. For $|t|<1$ the contour $\L$ is a left loop that separates the poles of the integrand of the
form $a_{jl}=1-a_j-n-l$, $l\in\N_0$, leaving them on the left from the poles of the form $1-n+k$, $k\in\N_0$, leaving them on the right. By definition the two types of poles must not
superimpose, which translates into the condition $-a_j\notin\N_0$. If they do, the definition can still be repaired by the regularization given in Proposition~\ref{lm:Gn-extendeddef}
below.  Further details regarding the choice of the contour and convergence of the integral can be found, for instance, in  \cite[section~1.1]{KilSaig} or in \cite[section~2]{KLMeijer1}.
Definition (\ref{eq:Gn-defined}) certainly works for any complex $n$, but for our purposes we confine ourselves to $n\in\N_0$. Note, that due to the shifting
property (see \cite[8.2.2.15]{PBM3} or \cite[16.19.2]{NIST})
$$
t^{\alpha}G^{m,n}_{p,q}\!\left(\!t~\vline\begin{array}{l}\b\\\a\end{array}\!\!\right)=G^{m,n}_{p,q}\!\left(\!t~\vline\begin{array}{l}\b+\alpha\\\a+\alpha\end{array}\!\!\right),
$$
any function $G^{p,1}_{p+1,p+1}(t)$ can be written as (\ref{eq:Gn-defined}) times some power of $t$ if $n$ is allowed to be complex. The restriction $n\in\N_0$ means that
the top left parameter must be greater than the bottom right parameter by a nonnegative integer.  We also recall the next definition from \cite[(34)]{KLMeijer1}:
\begin{equation}\label{eq:tildeGn-defiend}
\widetilde{G}_n(t)=G^{p+1,0}_{p+1,p+1}\left(t\left|\begin{array}{l}\!\!\b-1+n,n\!\!\\\!\!\a-1+n,0\!\!\end{array}\right.\right).
\end{equation}
It is straightforward that $\widehat{G}_0(t)=\widetilde{G}_0(t)=G_0(t)$, where $G_0(t)$ is defined in (\ref{eq:Gphiintegral}).  Define
\begin{equation}\label{eq:a-psi-defined}
a:=\min(\Re{a_1},\Re{a_2},\ldots,\Re{a_p})~~\text{and}~~\psi:=\sum\nolimits_{k=1}^{p}(b_k-a_k).
\end{equation}
If $\Re(\a)>0$ the function $\widehat{G}_n(t)$ can be computed as the $n-$th primitive of $G_0(x)$ that satisfies $\widehat{G}_n^{(k)}(0)=0$ for $k=1,2,\ldots,n$
(see details in Proposition~\ref{lm:Gnfracintegral} below):
\begin{equation}\label{eq:hatGnG0}
\widehat{G}_n(t)=\frac{1}{(n-1)!}\int_0^tG_0(x)(t-x)^{n-1}dx.
\end{equation}
As mentioned above the function $\widehat{G}_n(t)$ is not defined if any component of $\a$ is a non positive integer, since basic separation condition for the
contour $\L$ in (\ref{eq:Gn-defined}) is violated.  However, as we will see, the function $\widehat{G}_n(t)/\Gamma(\a)$ is entire in $\a$. \
If $\widehat{G}_n(t)/\Gamma(\a)$ is viewed as a function of one parameter (say $a_1$) and all elements of $\a_{[1]}=(a_2,\ldots,a_p)$ are different modulo integers, then
this claim follows from the representation \cite[(16.17.2)]{NIST}
\begin{equation}\label{eq:Gnhypergeomexp}
\frac{\widehat{G}_n(t)}{\Gamma(\a)}=\sum\limits_{k=1}^{p}\frac{\Gamma(\a_{[k]}-a_k)t^{a_k+n-1}}{\Gamma(\b-a_k)\Gamma(a_k+n)\Gamma(\a_{[k]})}
{}_{p+1}F_p\left.\left(\begin{matrix}a_k,1+a_k-\b\\a_k+n,1+a_k-\a_{[k]}\end{matrix}\right\vert t\right),
\end{equation}
where here and in the sequel $\a_{[k]}:=(a_1,\ldots,a_{k-1},a_{k+1},\ldots,a_p)$. However, in case of multiple poles (i.e. when some of the differences $a_i-a_k\in\Z$)
the situation becomes more delicate. In order to treat the general case we will need the following statement which shows that $\widetilde{G}_n(x)$ defined in
(\ref{eq:tildeGn-defiend}) and $(-1)^n\widehat{G}_n(x)$ differ by a polynomial.
\begin{propos}
The following identity holds true
\begin{equation}\label{eq:GGF}
\widetilde{G}_n(x)-(-1)^n\widehat{G}_n(x)=\frac{(-x)^{n-1}\Gamma(\a)}{(n-1)!\Gamma(\b)}{}_{p+1}F_p\left.\!\left(\begin{matrix}-n+1,\a\\\b\end{matrix}\right\vert \frac{1}{x}\right),
\end{equation}
where $\widetilde{G}_n$ and $\widehat{G}_n$ are defined in \emph{(\ref{eq:tildeGn-defiend})} and \emph{(\ref{eq:Gn-defined})}, respectively.
\end{propos}
\begin{proof} Assuming that $\Re(\a)$ and $\Re(\psi)$ are positive and substituting the definitions of  $\widetilde{G}_n$ and $\widehat{G}_n$ into the left hand side of (\ref{eq:GGF}),
in view of representation \cite[(41)]{KLMeijer1}, we get
\begin{multline*}
G^{p+1,0}_{p+1,p+1}\!\!\left(x\left|\begin{array}{l}\!\!n,\b+n-1\!\!\\\!\!\a+n-1,0\!\!\end{array}\right.\right)-
(-1)^nG^{p,1}_{p+1,p+1}\!\!\left(x\left|\begin{array}{l}\!\!n,\b+n-1\!\!\\\!\!\a+n-1,0\!\!\end{array}\right.\right)
\\
=\frac{1}{(n-1)!}\left\{\int_0^x(t-x)^{n-1}G^{p,0}_{p,p}\left(t\left\vert\begin{matrix}\b-1\\\a-1\end{matrix}\right.\right)dt
+\int_x^1(t-x)^{n-1}G^{p,0}_{p,p}\left(t\left\vert\begin{matrix}\b-1\\\a-1\end{matrix}\right.\right)dt\right\}
\\
=\frac{1}{(n-1)!}\left\{\int_0^x\left[\sum_{j=0}^{n-1}\binom{n-1}{j}t^j(-x)^{n-1-j}\right]G^{p,0}_{p,p}\left(t\left\vert\begin{matrix}\b-1\\\a-1\end{matrix}\right.\right)dt\right.
\\
\left.+\int_x^1\left[\sum_{j=0}^{n-1}\binom{n-1}{j}t^j(-x)^{n-1-j}\right]G^{p,0}_{p,p}\left(t\left\vert\begin{matrix}\b-1\\\a-1\end{matrix}\right.\right)dt\right\}
\\
=\frac{1}{(n-1)!}\left\{\sum_{j=0}^{n-1}\binom{n-1}{j}(-x)^{n-1-j}\int_0^1t^jG^{p,0}_{p,p}\left(t\left\vert\begin{matrix}\b-1\\\a-1\end{matrix}\right.\right)dt\right\}
\\
=\frac{\Gamma(\a)(-x)^{n-1}}{\Gamma(\b)(n-1)!}\sum_{j=0}^{n-1}\frac{(-n+1)_j(\a)_j}{(\b)_jj!}x^{-j}
=\frac{(-x)^{n-1}\Gamma(\a)}{(n-1)!\Gamma(\b)}{}_{p+1}F_p\left.\!\left(\begin{matrix}-n+1,\a\\\b\end{matrix}\right\vert 1/x\right),
\end{multline*}
where the Mellin transform of Meijer's $G$ function \cite[(16)]{KLMeijer1} has been used in the pre-ultimate equality. The positivity
restrictions $\Re(\a),\Re(\psi)>0$ can now be removed by analytic continuation.
\end{proof}

The above proposition leads immediately to the next statement.
\begin{propos}\label{lm:Gn-extendeddef}
The function $\widehat{G}_n(t)/\Gamma(\a)$  is entire in each component of $\a$ \emph{(}all apparent singularities are removable\emph{)}. If $a_{i}=-m_i$, $m_i\in\N_0$, for $i=1,\ldots,r$, $r\le{p}$, then
\begin{equation}\label{GnoverGammas}
\frac{\widehat{G}_n(x)}{\Gamma(\a)}=\frac{x^{n-1}}{(n-1)!\Gamma(\b)}{}_{p+1}F_p\left.\!\left(\begin{matrix}-n+1,-\m,\a'\\\b\end{matrix}\right\vert 1/x\right),
\end{equation}
where $\m=(m_1,\ldots,m_r)$  and $\a=(-\m,\a')$.
\end{propos}
\begin{proof} The only potential singularities of $\a\to\widehat{G}_n(t)/\Gamma(\a)$ are those points where some or all the components of $\a$ are non-positive integers,
since these points violate the separation condition necessary for existence of the contour defining $\widehat{G}_n(t)$. Suppose $\a=(-\m,\a')$, where $\m=(m_1,\ldots,m_r)$
are nonnegative integers and no component of $\a'$ is equal to a non-positive integer. Using this notation we need to calculate
$$
\lim\limits_{\hm{\epsilon}\to0}\frac{1}{\Gamma(-\m-\hm{\epsilon})\Gamma(\a')}
G^{p,1}_{p+1,p+1}\left(t\left|\begin{array}{l}\!\!n,\b+n-1\!\!\\\!\!-\m-\hm{\epsilon}+n-1,\a'+n-1,0\!\!\end{array}\right.\right),
$$
where $\hm{\epsilon}=(\epsilon_1,\ldots,\epsilon_r)$. Dividing (\ref{eq:GGF}) by $\Gamma(\a)$ and taking the limit we get (\ref{GnoverGammas})
which shows that all singularities are indeed removable.
\end{proof}

Before we turn to the next proposition we need to recall some properties of the Meijer-N{\o}rlund function $G^{p,0}_{p,p}$ elaborated in \cite[section~2]{KLMeijer1} and
\cite{KPSIGMA}.  First, we will need N{\o}rlund's expansion
\begin{equation}\label{eq:Norlund}
G^{p,0}_{p,p}\!\left(\!z~\vline\begin{array}{l}\b\\\a\end{array}\!\!\right)=\frac{z^{a_k}(1-z)^{\psi-1}}{\Gamma(\psi)}
\sum\limits_{j=0}^{\infty}\frac{g_j(\a_{[k]};\b)}{(\psi)_j}(1-z)^j,~~~k=1,2,\ldots,p,
\end{equation}
which holds in the disk $|1-z|<1$ for all $-\psi=-\sum_{i=1}^{p}(b_i-a_i)\notin\N_0$ and each $k=1,2,\ldots,p$.   The coefficients  $g_n(\a_{[k]};\b)$ are given by
\cite[(1.28), (2.7), (2.11)]{Norlund}:
\begin{equation}\label{eq:Norlund-explicit}
g_j(\a_{[p]};\b)=\sum\limits_{0\leq{j_{1}}\leq{j_{2}}\leq\cdots\leq{j_{p-2}}\leq{j}}
\prod\limits_{m=1}^{p-1}\frac{(\psi_m+j_{m-1})_{j_{m}-j_{m-1}}}{(j_{m}-j_{m-1})!}(b_{m+1}-a_{m})_{j_{m}-j_{m-1}},
\end{equation}
where $\psi_m=\sum_{i=1}^{m}(b_i-a_i)$, $j_0=0$, $j_{p-1}=j$. The coefficient $g_j(\a_{[k]};\b)$ is obtained from $g_j(\a_{[p]};\b)$
by exchanging $a_p$ and $a_k$.  These coefficients satisfy two different recurrence relations (in $p$ and $j$)
also discovered by N{\o}rlund. Details can be found in \cite[section~2.2]{KPSIGMA}.
Taking limit $\psi\to-l$, $l\in\N_0$ in (\ref{eq:Norlund}) we obtain
\begin{equation}\label{eq:Norlund1}
G^{p,0}_{p,p}\!\left(\!z~\vline\begin{array}{l}\b\\\a\end{array}\!\!\right)=z^{a_k}
\sum\limits_{j=0}^{\infty}\frac{g_{j+l+1}(\a_{[k]},\b)}{j!}(1-z)^j,~~~k=1,2,\ldots,p,
\end{equation}
where $\psi=-l$, $l\in\N_0$ (see \cite[formula~(1.34)]{Norlund}). Hence, $G^{p,0}_{p,p}$ is analytic in the neighborhood of $z=1$ for non-positive integer values of $\psi$.
The Mellin transform of $G^{p,0}_{p,p}$ exists if either  $\Re(\psi)>0$ or $\psi=-m\in\N_0$. In the former case
\begin{equation}\label{eq:GMellin}
\int\limits_{0}^{\infty}x^{s-1}G^{p,0}_{p,p}\!\left(\!x~\vline\begin{array}{l}\b\\\a\end{array}\!\!\right)dx
=\int\limits_{0}^{1}x^{s-1}G^{p,0}_{p,p}\!\left(\!x~\vline\begin{array}{l}\b\\\a\end{array}\!\!\right)dx
=\frac{\Gamma(\a+s)}{\Gamma(\b+s)}
\end{equation}
is valid in the intersection of the half-planes $\Re(s+a_i)>0$, $i=1,\ldots,p$.  If $\psi=-m\in\N_0$, then
\begin{equation}\label{eq:GMellinNorlund}
\int\limits_{0}^{\infty}x^{s-1}G^{p,0}_{p,p}\!\left(\!x~\vline\begin{array}{l}\b\\\a\end{array}\!\!\right)dx
=\int\limits_{0}^{1}x^{s-1}G^{p,0}_{p,p}\!\left(\!x~\vline\begin{array}{l}\b\\\a\end{array}\!\!\right)dx
=\frac{\Gamma(\a+s)}{\Gamma(\b+s)}-q(s)
\end{equation}
in the same half-plane, where $q(s)$ is a polynomial of degree $m$ given by
\begin{equation}\label{eq:q-polynomial}
q(s)=\sum\limits_{j=0}^{m}g_{m-j}(\a_{[k]};\b)(s+a_k-j)_j,~~~k=1,2,\ldots,p.
\end{equation}
The coefficients $g_{i}(\a_{[k]};\b)$ depend on $k$.  The resulting polynomial $q(s)$, however, is the same for each $k$.
Given a nonnegative integer $k$ suppose that $\Re(\psi)>-k$ and $\Re(\a)>0$. Then we have
\begin{equation}\label{eq:intG1-x}
\int\limits_{0}^{1}G^{p,0}_{p,p}\!\left(\!x~\vline\begin{array}{l}\b-1\\\a-1\end{array}\!\!\right)(1-x)^kdx
={\Gamma(\a)\over\Gamma(\b)}{}_{p+1}F_p\left.\left(\begin{matrix}-k, \a\\ \b\end{matrix}\right\vert 1\right).
\end{equation}

We will need the asymptotic properties of $\widehat{G}_n$ summarized in the next two propositions.

\begin{propos}\label{lm:Gn-asymp1}
Suppose that $n\in\N$, $\a$ and $\b$ are arbitrary complex vectors.  If $\Re(\psi)+n-1>0$ or if $\a$ contains non-positive integers, then
\begin{equation}\label{eq:hatG1first}
\frac{\widehat{G}_n(x)}{\Gamma(\a)}=\frac{{}_{p+1}F_{p}(-n+1,\a;\b;1)}{(n-1)!\Gamma(\b)}+o(1)~\text{as}~x\to1.
\end{equation}
If $\psi=-m$ with integer $m\ge{n-1}$ and $-\a\notin\N_0$, then
\begin{equation}\label{eq:hatG1second}
\frac{\widehat{G}_n(x)}{\Gamma(\a)}=\frac{{}_{p+1}F_{p}(-n+1,\a;\b;1)}{\Gamma(\b)(n-1)!}
-\frac{1}{\Gamma(\a)}\sum\limits_{j=0}^{n-1}\frac{(-1)^jq(j)}{(n-1-j)!j!}+o(1)~\text{as}~x\to1,
\end{equation}
where $q(\cdot)$ is given in \emph{(\ref{eq:q-polynomial})}. If $\Re(\psi)+n-1<0$, $-\psi\notin\N_0$, and $-\a\notin\N_0$, then
\begin{equation}\label{eq:hatG1third}
\frac{\widehat{G}_n(x)}{\Gamma(\a)}=\frac{(-1)^{n}(1-x)^{\psi+n-1}}{\Gamma(\a)\Gamma(\psi+n)}(1+o(1))~\text{as}~x\to1.
\end{equation}
\end{propos}
\begin{proof} Indeed, according to (\ref{eq:Norlund})
$$
\widetilde{G}_n(x)=\frac{x^{\widetilde{a}_k}(1-x)^{\psi+n-1}}{\Gamma(\psi+n)}
\sum\limits_{j=0}^{\infty}\frac{g_j(\widetilde{\a}_{[k]};\widetilde{\b})}{(\psi+n)_j}(1-x)^j,~~~k=1,2,\ldots,p+1,
$$
where $\widetilde{\a}=(\a+n-1,0)$, $\widetilde{\b}=(\b+n-1,n)$. The asymptotic relation (\ref{eq:hatG1first}) for $\Re(\psi)+n-1>0$ as well as formula (\ref{eq:hatG1third})
follow by substituting the above formula into (\ref{eq:GGF}) and letting $x\to1$.  In deducing  (\ref{eq:hatG1third}) we also used
that $g_0(\widetilde{\a}_{[k]};\widetilde{\b})=1$.  If $\a$ contains non-positive integers then (\ref{eq:hatG1first})  follows directly from (\ref{GnoverGammas}).
Finally, assume that $\psi=-m$ with integer $m\ge{n-1}$ and $\a$ does not contain non-positive integers. Then by (\ref{eq:hatGnG0}) and (\ref{eq:GMellinNorlund})
$$
\widehat{G}_n(1)=\frac{\int_0^1G_0(x)(t-x)^{n-1}dx}{(n-1)!}={\Gamma(\a){}_{p+1}F_{p}(-n+1,\a;\b;1)\over\Gamma(\b)(n-1)!}-\frac{1}{(n-1)!}\sum\limits_{j=0}^{n-1}(-1)^j\binom{n-1}{j}q(j),
$$
which is a rewriting of (\ref{eq:hatG1second}).
\end{proof}

Note that the poles of the numerator of the integrand
$$
t^{-s}\frac{\Gamma(\a+n-1+s)\Gamma(1-n\!-\!s)}{\Gamma(\b+n-1+s)\Gamma(1-s)}
$$
in the definition of $G^{p,1}_{p+1,p+1}(t)$ may cancel out with the poles of the denominator.   Suppose that $b_k=a_i+l$ for some $k=1,\ldots,p$ and $l\in\Z$.
If $l\leq0$, then all the poles of the function $\Gamma(a_i+n-1+s)$ cancel out with the poles of $\Gamma(b_k+n-1+s)$. We will call the index $i$
and the corresponding component $a_i$ normal if at least one pole of $\Gamma(a_i+n-1+s)$ does not cancel (if such pole is single then it is necessarily the rightmost one).
We say that $\a$ is normal if all its components are normal. In general situation we can ''normalize'' $\a$ by deleting the exceptional ($=$ not normal) components.

\begin{propos}\label{lm:Gn-asymp}
Suppose that $\a\in\C^{p'}$ is normal or normalized and $-\a\notin\N_0$. Set
\begin{equation}\label{eq:aAdefined}
a'=\min(\Re(a_1),\Re(a_2),\ldots,\Re(a_{p'})),~~~\mathcal{A}=\{a_i: \Re(a_i)=a'\}.
\end{equation}
\emph{(}$\mathcal{A}$  is generally a multiset, i.e. it may contain repeated elements.\emph{)}
Write $r\in\N$ for the maximal multiplicity among the elements of $\mathcal{A}$ and $\widehat{a}_1,\ldots,\widehat{a}_l$
for the distinct elements of $\mathcal{A}$ each having multiplicity $r$. Then
\begin{equation}\label{eq:Gnasymp0}
\frac{\widehat{G}_n(x)}{\Gamma(\a)}=\sum\limits_{k=1}^{l}\alpha_k x^{\widehat{a}_k+n-1}[\log(1/x)]^{r-1}[1+o(1)]~~\text{as}~x\to0,
\end{equation}
where
\begin{equation}\label{eq:alphak}
\alpha_k=\frac{\prod_{a_i\ne\widehat{a}_k}\Gamma(a_i-\widehat{a}_k)}
{(r-1)!\Gamma(\a)(\widehat{a}_k)_n\prod_{i=1}^p\Gamma(b_i-\widehat{a}_k)}.
\end{equation}
If the normalized vector $\a$ does contain non-positive integers, i.e. $\a=(-\m,\tilde{\a})$ with $\m\in\N_0^j$ and
 $\tilde{\a}\in\C^{p'-j}$, $-\tilde{\a}\notin\N_0$, then
\begin{equation}\label{eq:Gnasymp0aint}
\frac{\widehat{G}_n(x)}{\Gamma(\a)}=x^{n-1-m}\left[\frac{(-1)^m(n-m)_m(\a)_m}{\Gamma(\b+m)(n-1)!m!}+\O(x)\right]~~\text{as}~x\to0,
\end{equation}
where $m=\min(m_1,\ldots,m_j,n-1)$.
\end{propos}
\begin{proof}
The asymptotic approximation as $x\to0$ for the general Fox's $H$ function, of which Meijer's $G$ function is a particular case, is given in \cite[Theorem~1.5]{KilSaig}.
However, the computation of the constant in \cite[formula (1.4.6)]{KilSaig} seems to contain an error, so we redo this computation here. The result in \cite[Theorem~1.5]{KilSaig}
also  excludes the case when $\a$ contains non-positive integers. If $\a$ does non contain non-positive integer components, we see from \cite[Theorem~1.5]{KilSaig} that
the asymptotics of $\widehat{G}_n(x)$ as $x\to0$ is determined by the rightmost poles of the integrand having maximal multiplicity $r$,
i.e by the numbers $\widehat{a}_1,\ldots,\widehat{a}_l$.  Let us consider the contribution of the residue at the pole at $s=1-n-\widehat{a}_1$ assuming
that $\widehat{a}_1$ is not a non-positive integer. From definition (\ref{eq:Gn-defined}) we have:
\begin{multline*}
\res\limits_{s=1-n-\widehat{a}_1}{\Gamma(1-n-s)\prod_{i=1}^{p}\Gamma(a_i+n+s-1)\over
x^s\Gamma(1-s)\prod_{i=1}^p\Gamma(b_i+n+s-1)}={1\over(r-1)!}\times
\\
\lim_{s\to1-n-\widehat{a}_1}\left[{[(s+\widehat{a}_1+n-1)\Gamma(\widehat{a}_1+s+n-1)]^r\Gamma(1-n-s)\prod_{a_i\ne\widehat{a}_1}\Gamma(a_i+n+s-1)\over
\prod_{i=1}^p\Gamma(b_i+n+s-1)\Gamma(1-s)x^s}\right]^{(r-1)}\!\!\!\!\!\!\!.
\end{multline*}
Using the straightforward relations
$$
(s\!+\!\widehat{a}_1\!+\!n\!-\!1)\Gamma(\widehat{a}_1\!+\!s\!+\!n\!-\!1)=\Gamma(\widehat{a}_1\!+\!s\!+\!n),
$$
$$
{\Gamma(1-n-s)\over\Gamma(1-s)}={(-1)^n\over(s)_n},~ \frac{\partial^{r-1}}{\partial{s}^{r-1}}x^{-s}=x^{-s}(-\log{x})^{r-1}
$$
and the fact that the above $(r-1)$-th derivative in the definition of the residue  has the form
$$
\lbrace f(s)x^{-s}\rbrace^{(r-1)}=\lbrace f(s)\rbrace(x^{-s})^{(r-1)}+\O\left((x^{-s})^{(r-2)}\right) \hskip 1cm \text{as}~x\to0,
$$
we find that
\begin{multline*}
G^{p,1}_{p+1,p+1}\left(x\left|\begin{array}{l}\!\!n,\b+n-1\!\!\\\!\!\a+n-1,0\!\!\end{array}\right.\right)
={\prod_{a_i\ne\widehat{a}_1}\Gamma(a_i-\widehat{a}_1)(-1)^{n}\over
(r-1)!\prod_{i=1}^p\Gamma(b_i-\widehat{a}_1)(1-n-\widehat{a}_1)_n}x^{\widehat{a}_1+n-1}[\log(1/x)]^{r-1}
\\[5pt]
+\O(x^{\widehat{a}_1+n-1}\log^{r-2}(x))+\text{contributions from other poles}.
\end{multline*}
Finally applying $(1-n-\widehat{a}_1)_n=(-1)^n(\widehat{a}_1)_n$ we obtain
$$
G^{p,1}_{p+1,p+1}\left(x\left|\begin{array}{l}\!\!n,\b+n-1\!\!\\\!\!\a+n-1,0\!\!\end{array}\right.\right)
\!\!=\!\!\tilde{\alpha}_1 x^{\widehat{a}_1+n-1}[\log(1/x)]^{r-1}+\O(x^{\widehat{a}_1+n-1}\log^{r-2}(x))+\text{residues at other poles},
$$
with
$$
\tilde{\alpha}_1={\prod_{a_i\ne\widehat{a}_1}\Gamma(a_i-\widehat{a}_1)\over(r-1)!(\widehat{a}_1)_n\prod_{i=1}^p\Gamma(b_i-\widehat{a}_1)}.
$$
Adding up similar contributions from the poles at $s=1-n-\widehat{a}_i$, $i=1,\ldots,l$ , and dividing through by $\Gamma(\a)$ we arrive at formula (\ref{eq:Gnasymp0})
for $-\widehat{a}_j\notin\N_0$ for $j=1,\ldots,l$. If the pole at  $s=1-n-\widehat{a}_1$ is simple, then $r=1$ and the above calculation simplifies,
namely $\O(x^{\widehat{a}_1+n-1}\log^{r-2}(x))$ disappears and the principal contribution from other poles will have asymptotic
order $\O(x^{\tilde{a}_2+n-1})$, where $\tilde{a}_2$ is the element with the second smallest real part.  This confirms that (\ref{eq:Gnasymp0})
remains valid in this case.

If $\a=(-\m,\tilde{\a})$ with $\m=(m_1,\ldots,m_j)\in\N_0^j$, then the result  (\ref{eq:Gnasymp0aint}) follows immediately from (\ref{GnoverGammas})
in view of the identity $(1-n)_m=(-1)^m(n-m)_m$.
\end{proof}

\textbf{Remark.}  In what follows the case of real parameters will play a special role. In this case we necessarily have $l=1$ and $\widehat{a}_1=\min_i(a_i)$.
If $\a$ does not contains non-positive integers, then the sign of $\widehat{G}_n(x)/\Gamma(\a)$ in the neighborhood of $x=0$ is determined by the sign of the real nonzero
constant $\alpha_1$ from (\ref{eq:alphak}). In this case, define $\eta\in\{0,1\}$ implicitly by
\begin{subequations}
\label{eq:et}
\begin{equation}\label{eq:et1}
(-1)^{\eta}=\mathrm{sgn}(\alpha_1)=\mathrm{sgn}\left[\frac{1}{\Gamma(\a)(\widehat{a}_1)_n\prod_{i=1}^p\Gamma(b_i-\widehat{a}_1)}\right],
\end{equation}
where $b_i-\widehat{a}_1$ can not take non-positive integer values as we assume $\a$ to be normalized as explained before Proposition~\ref{lm:Gn-asymp}.
If $\a$ contains non-positive integers then the sign of $\widehat{G}_n(x)/\Gamma(\a)$ in the neighborhood of $x=0$ is determined by the
sign of the constant in (\ref{eq:Gnasymp0aint}), and we define $\eta\in\{0,1\}$ by
\begin{equation}\label{eq:et2}
(-1)^{\eta}=\mathrm{sgn}\left[\frac{(-1)^m(\a)_m}{\Gamma(\b+m)}\right].
\end{equation}
\end{subequations}
Note that formulas (\ref{eq:et}) imply that $(-1)^{\eta}\widehat{G}_n(x)/\Gamma(\a)$ is positive in the neighborhood of $x=0$
for all real vectors $\a$ and $\b$ and the number $\eta$ is independent of $n$ once $n+\widehat{a}_1>0$.

\smallskip

Next, we show that $\widehat{G}_n(x)$ and $\widehat{G}_m(t)$ are related by the Riemann-Liouville fractional integral.
\begin{propos}\label{lm:Gnfracintegral}
Suppose $n>m\ge0$ are integers and $\a$, $\b$ are arbitrary complex vectors satisfying $m+\Re(\a)>0$. Then
\begin{equation}\label{eq:GnGm}
\frac{\widehat{G}_n(x)}{\Gamma(\a)}=\frac{1}{(n-m-1)!}\int_0^x\frac{\widehat{G}_m(t)}{\Gamma(\a)}(x-t)^{n-m-1}dt.
\end{equation}
\end{propos}
\begin{proof}
If $\a$ does not contain non-positive integer components, then the claim follows from a particular case
of \cite[2.24.2.2]{PBM3}. If $\a$ contains such components formula (\ref{eq:GnGm}) can either be justified by analytic
continuation in $\a$ in view of Proposition~\ref{lm:Gn-extendeddef} or confirmed directly by termwise integration
of (\ref{GnoverGammas}).  The convergence of the integral under the specified conditions follows from Proposition~\ref{lm:Gn-asymp} (for $m>0$)
and \cite[Property~5]{KLMeijer1} (for $m=0$).
\end{proof}
Proposition~\ref{lm:Gnfracintegral} implies, in particular,  that $\widehat{G}_{m+1}'(x)=\widehat{G}_{m}(x)$
and $\widehat{G}_{m}(0)=0$ for $m+\Re(\a)>0$.

\smallskip

In the next proposition we compute the moments of $\widehat{G}_n(t)$.
\begin{propos}\label{lm:Gnmoments}
Suppose $\N_0\ni{n}>-\min(\Re(\psi),\Re(\a))$. Then the following formulas hold\emph{:}
\begin{equation}\label{eq:gk}
m_k\!=\!\!\!\int\limits_0^1\!\!\widehat{G}_n(t)t^kdt\!=\!\frac{\Gamma(\a)}{\Gamma(\b)(n\!+\!k)(n\!-\!1)!}{}_{p+2}F_{p+1}\left.\!\!\left(\!\begin{matrix}-n-k,-n+1,\a\\-n-k+1,\b\end{matrix}\right\vert 1\!\right)\!+\!\frac{(-1)^n\Gamma(\a+n+k)k!}{\Gamma(\b+n+k)(n+k)!},
\end{equation}
\begin{equation}\label{eq:bargk}
\hat{m}_k=\int_0^1\widehat{G}_n(t)(1-t)^kdt=\frac{\Gamma(\a)k!}{\Gamma(\b)(n+k)!}{}_{p+1}F_{p}\left.\left(\begin{matrix}-n-k,\a\\\b\end{matrix}\right\vert 1\right)
\end{equation}
for $k\in\N_0$ and $1/(-1)!=0$.  Moreover,
define $\Delta{m_k}=m_{k+1}-m_{k}$, $\Delta^r{m_k}=\Delta(\Delta^{r-1}{m_k})$. Then, for $k,r\in\N_0$,
\begin{equation}\label{eq:Deltamgk}
\Delta^r{m_k}=\Delta^k{\hat{m}_r}=\int_0^1\widehat{G}_n(t)(1-t)^rt^kdt
=\frac{\Gamma(\a)}{\Gamma(\b)}\sum\limits_{j=0}^{k}\binom{k}{j}\frac{(-1)^{j}(r+j)!}{(n+r+j)!}
{}_{p+1}F_p\left.\left(\begin{matrix}-n-r-j,\a\\ \b\end{matrix}\right\vert 1\right).
\end{equation}
\end{propos}
\textbf{Remark.}  Formulas (\ref{eq:gk})-(\ref{eq:Deltamgk}) remain valid if $\a$ contains non-positive integers
after division by $\Gamma(\a)$ in view of Proposition~\ref{lm:Gn-extendeddef}.

\begin{proof} Assume for a moment that $\Re(\a)>0$ and $\Re(\psi)>0$.  Then we have
\begin{multline*}
\int_0^1\widehat{G}_n(t)t^kdt=\frac{1}{(n-1)!}\int_0^1t^kdt\int_0^tG^{p,0}_{p,p}\left(x\left\vert\begin{matrix}\b-1\\\a-1\end{matrix}\right.\right)(t-x)^{n-1}dx
\\
=\frac{1}{(n-1)!}\int_0^1G^{p,0}_{p,p}\left(x\left\vert\begin{matrix}\b-1\\\a-1\end{matrix}\right.\right)dx\int_x^1(t-x)^{n-1}t^kdt.
\end{multline*}
The inner integral is computed by an application of the binomial expansion to $(t-x)^{n-1}$:
\begin{multline*}
\int_x^1(t-x)^{n-1}t^kdt=\int_x^1t^kdt\sum\limits_{l=0}^{n-1}\binom{n-1}{l}t^l(-x)^{n-1-l}
=\sum\limits_{l=0}^{n-1}\binom{n-1}{l}(-x)^{n-1-l}\int_x^1t^{k+l}dt
\\
=\sum\limits_{l=0}^{n-1}(-1)^{n-1-l}\binom{n-1}{l}\frac{x^{n-1-l}-x^{n+k}}{k+l+1}
=\frac{1}{n+k}{}_2F_1\left.\left(\begin{matrix}-n-k,-n+1\\-n-k+1\end{matrix}\right\vert x\right)+\frac{(-1)^nx^{n+k}}{n\binom{n+k}{k}}.
\end{multline*}
The last equality can be verified by comparing the coefficients at equal powers of $x$.  Substituting this expression into
the formula above, integrating termwise and applying (\ref{eq:GMellin}) we arrive at (\ref{eq:gk}).
To verify (\ref{eq:bargk})  write
$$
\hat{m}_k=\int_0^1\widehat{G}_n(t)(1-t)^kdt=\frac{1}{(n-1)!}\int_0^1(1-t)^kdt\int_0^tG^{p,0}_{p,p}\left(x\left\vert\begin{matrix}\b-1\\\a-1\end{matrix}\right.\right)(t-x)^{n-1}dx
$$
and repeat the above steps with (\ref{eq:intG1-x}) in place of (\ref{eq:GMellin}) to obtain (\ref{eq:bargk}).  Finally,
to get (\ref{eq:Deltamgk}) calculate
$$
\int_0^1(1-t)^r\widehat{G}_n(t)t^kdt=\frac{1}{(n-1)!}\int_0^1G^{p,0}_{p,p}\left(x\left\vert\begin{matrix}\b-1\\\a-1\end{matrix}\right.\right)dx\int_x^1(t-x)^{n-1}t^k(1-t)^rdt.
$$
Now, using
$$
\int_x^1(t-x)^{n-1}(1-t)^ldt=\frac{(1-x)^{n+l}l!(n-1)!}{(n+l)!},
$$
we have
\begin{multline*}
\int_x^1(t-x)^{n-1}t^k(1-t)^rdt=\int_x^1(t-x)^{n-1}(1-(1-t))^k(1-t)^rdt
\\
=\sum\limits_{j=0}^{k}(-1)^{j}\binom{k}{j}\int_x^1(t-x)^{n-1}(1-t)^{r+j}dt
=\sum\limits_{j=0}^{k}(-1)^{j}\binom{k}{j}\frac{(1-x)^{n+r+j}(r+j)!(n-1)!}{(n+r+j)!},
\end{multline*}
so that
$$
\Delta^r{m_k}=\Delta^k{\hat{m}_r}=\int_0^1(1-t)^r\widehat{G}_n(t)t^kdt=\frac{\Gamma(\a)}{\Gamma(\b)}\sum\limits_{j=0}^{k}(-1)^{j}\binom{k}{j}\frac{(r+j)!}{(n+r+j)!}
{}_{p+1}F_p\left.\left(\begin{matrix}-n-r-j, \a\\ \b\end{matrix}\right\vert 1\right).
$$
The restrictions $\Re(\a)>0$,  $\Re(\psi)>0$ can now be removed by analytic continuation.
\end{proof}

\begin{corollary}
For any natural $n$ and nonnegative integer $k$ the next identity holds\emph{:}
$$
\sum\limits_{j=0}^{k}\binom{k}{j}\frac{(-1)^{j}j!}{(n+j)!}
{}_{p+1}F_p\left.\!\!\left(\!\begin{matrix}-n-j, \a\\ \b\end{matrix}\right\vert 1\!\right)\!=
\!\frac{1}{(n+k)\Gamma(n)}{}_{p+2}F_{p+1}\left.\!\!\left(\!\begin{matrix}-n-k,-n+1,\a\\-n-k+1,\b\end{matrix}\right\vert 1\!\right)
+\frac{(-1)^n(\a)_{n+k}k!}{(\b)_{n+k}(n+k)!}.
$$
\end{corollary}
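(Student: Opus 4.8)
The plan is to read the identity off Proposition~\ref{lm:Gnmoments} by evaluating the moment $m_k=\int_0^1\widehat{G}_n(t)t^kdt$ in two different ways. First I would specialise formula~(\ref{eq:Deltamgk}) to $r=0$. Since $\Delta^0m_k=m_k$, while $(r+j)!=j!$ and $(n+r+j)!=(n+j)!$ at $r=0$, that formula collapses to
\begin{equation*}
m_k=\frac{\Gamma(\a)}{\Gamma(\b)}\sum\limits_{j=0}^{k}\binom{k}{j}\frac{(-1)^{j}j!}{(n+j)!}{}_{p+1}F_p\left.\!\left(\begin{matrix}-n-j,\a\\\b\end{matrix}\right\vert 1\right),
\end{equation*}
which is exactly $\Gamma(\a)/\Gamma(\b)$ times the left-hand side of the asserted identity.

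Next I would invoke the independent evaluation~(\ref{eq:gk}) of the same quantity $m_k$. Equating the two expressions and cancelling the common factor $\Gamma(\a)/\Gamma(\b)$ yields
\begin{equation*}
\sum\limits_{j=0}^{k}\binom{k}{j}\frac{(-1)^{j}j!}{(n+j)!}{}_{p+1}F_p\left.\!\left(\begin{matrix}-n-j,\a\\\b\end{matrix}\right\vert 1\right)=\frac{1}{(n+k)(n-1)!}{}_{p+2}F_{p+1}\left.\!\left(\begin{matrix}-n-k,-n+1,\a\\-n-k+1,\b\end{matrix}\right\vert 1\right)+\frac{(-1)^n\Gamma(\a+n+k)\Gamma(\b)k!}{\Gamma(\a)\Gamma(\b+n+k)(n+k)!}.
\end{equation*}
It then only remains to rewrite the elementary factors in the last term via $\Gamma(\a+n+k)/\Gamma(\a)=(\a)_{n+k}$, $\Gamma(\b)/\Gamma(\b+n+k)=1/(\b)_{n+k}$ and $(n-1)!=\Gamma(n)$, which turns the display into the statement of the corollary.

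There is no genuine obstacle here; the only point that needs a remark is the range of validity. Proposition~\ref{lm:Gnmoments} was proved under the restriction $n>-\min(\Re(\psi),\Re(\a))$, whereas the corollary asserts the identity for arbitrary $\a,\b$. However, for each fixed $n\in\N$ and $k\in\N_0$ both sides are finite linear combinations of terminating hypergeometric series together with one ratio of gamma functions, hence they are rational functions of the components of $\b$ with polynomial-in-$\a$ coefficients; an identity valid on the non-empty open set $\{\,n>-\Re(a_i)\ \text{for all}\ i,\ n>-\Re(\psi)\,\}$ (which contains, for instance, all $\a,\b$ with sufficiently large real parts) therefore persists by analytic continuation wherever both sides are defined, and in particular remains valid for $-\a\in\N_0$ after the $\Gamma(\a)$-normalisation recorded in the Remark following Proposition~\ref{lm:Gnmoments}.
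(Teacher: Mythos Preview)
Your proposal is correct and is precisely the argument the paper gives: the corollary is obtained by equating the $r=0$ case of (\ref{eq:Deltamgk}) with (\ref{eq:gk}) and simplifying the gamma ratios. Your additional paragraph on analytic continuation in $\a,\b$ is a reasonable elaboration, though the paper does not spell it out for this corollary.
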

\begin{proof} The result follows on comparing (\ref{eq:gk}) with the $r=0$ case of (\ref{eq:Deltamgk}).
Note that the claimed identity is not contained in \cite{PBM3} but can be deduced from \cite[15.3.2.12]{PBM3}
by the appropriate limit transition.
\end{proof}

Formulas for the moments derived in Proposition~\ref{lm:Gnmoments}  play the key role in the following computation of
 the hypergeometric transform of  $\widehat{G}_n(t)$.

\begin{propos}\label{lm:Gnhypergeomint}
Suppose that $u\le{s+1}$ are nonnegative integers, $\c\in\C^{u}$, $\d\in\C^{s}$, $\Re(\psi)>-n$ and $\Re(\a)>-n$,
where  $\psi$ is defined in \textup{(\ref{eq:a-psi-defined})}. Then
\begin{equation}\label{eq:Gnhypergeomint}
\frac{1}{\Gamma(\a)}\int_0^1\!\!{}_uF_{s}\left.\!\left(\begin{matrix}\c\\\d\end{matrix}\:\right\vert -\!zt\right)\widehat{G}_n(t)dt
=\frac{1}{\Gamma(\b)}\sum_{j=0}^{\infty}\frac{z^j(\c)_j}{(n+j)!(\d)_j}
{}_uF_{s}\!\left.\left(\begin{matrix}\c+j\\\d+j\end{matrix}\right\vert -\!z\!\right)
{}_{p+1}F_p\left.\!\left(\begin{matrix}-n-j,\a\\\b\end{matrix}\right\vert 1\!\right),
\end{equation}
where the series on the right converges for all $z\in\C$ if $u\le{s}$ and in the half-plane $\Re(z)>-1/2$ if $u=s+1$.
Therefore, for $u=s+1$ the integral on the left hand side is an explicit representation of the analytic continuation to the cut plane
$\C\!\setminus\!(-\infty,-1]$ of the function defined by the right hand side.
\end{propos}
\begin{proof}  Assume first that $\Re(\a)>0$.  Then using (\ref{eq:intG1-x}) and the $r=0$ case of (\ref{eq:Deltamgk}) we obtain
by termwise integration and interchange of the order of summations:
\begin{multline*}
\frac{1}{\Gamma(\a)}\int_0^1{}_uF_{s}\left.\!\left(\begin{matrix}\c\\\d\end{matrix}\right\vert -\!zt\right)\widehat{G}_n(t)dt
=\frac{1}{\Gamma(\b)}\sum_{k=0}^{\infty}\frac{(\c)_k(-z)^k}{(\d)_kk!}\sum\limits_{j=0}^{k}\binom{k}{j}\frac{(-1)^{j}j!}{(n+j)!}
{}_{p+1}F_p\left.\left(\begin{matrix}-n-j,\a\\ \b\end{matrix}\right\vert 1\right)
\\
=\frac{1}{\Gamma(\b)}\sum\limits_{j=0}^{\infty}\frac{(-1)^{j}j!}{(n+j)!}
{}_{p+1}F_p\left.\left(\begin{matrix}-n-j,\a\\ \b\end{matrix}\right\vert 1\right)
\sum_{k=j}^{\infty}\binom{k}{j}\frac{(\c)_k(-z)^k}{(\d)_kk!}
\\
=\frac{1}{\Gamma(\b)}\sum_{j=0}^{\infty}\frac{z^j(\c)_j}{(n+j)!(\d)_j}
{}_{p+1}F_p\left.\!\left(\begin{matrix}-n-j,\a\\\b\end{matrix}\right\vert 1\!\right)
\sum_{l=0}^{\infty}\frac{(\c+j)_l(-z)^{l}}{(\d+j)_ll!}.
\end{multline*}
The claims regarding the convergence domains are justified by the following formulas due to Krottnerus.
If $u=s+1$, then by \cite[7.3(3)]{LukeBook} for $z\in\C\!\setminus\!(-\infty,-1]$
$$
{}_{s+1}F_{s}\!\left.\left(\begin{matrix}\c+j\\\d+j\end{matrix}\right\vert -\!z\!\right)=(1+z)^{\nu-j}\left(1+\O(j^{-1})\right)
~\text{as}~j\to\infty~\text{with}~\nu=\sum_{i=1}^{s}d_i-\sum_{i=1}^{s+1}c_i.
$$
If $u=s$, then by  \cite[7.3(4)]{LukeBook} for all $z\in\C$,
$$
{}_{s}F_{s}\!\left.\left(\begin{matrix}\c+j\\\d+j\end{matrix}\right\vert -\!z\!\right)=e^{-z}\left(1+\O(j^{-1})\right)
~\text{as}~j\to\infty.
$$
Finally, if $u<s$ by \cite[7.3(5)]{LukeBook} for all $z\in\C$,
$$
{}_{u}F_{s}\!\left.\left(\begin{matrix}\c+j\\\d+j\end{matrix}\right\vert -\!z\!\right)=1+\O(j^{u-s})~\text{as}~j\to\infty.
$$
It remains to note that ${}_{p+1}F_p\!\left(-n-j,\a;\b\vert 1\!\right)$ cannot grow faster than polynomially
by (\ref{eq:bargk}) and $|z/(z+1)|<1$ is equivalent to $\Re(z)>-1/2$.  As both sides of (\ref{eq:Gnhypergeomint}) are analytic in each $a_i$
in the domain $\Re(a_i)>-n$, the formula holds  in the region stated in the conclusions of the proposition
in view of Proposition~\ref{lm:Gn-extendeddef}.
\end{proof}

Particular cases of Proposition~\ref{lm:Gnhypergeomint} lead to the formulas for the generalized Stieltjes, Laplace and (slightly modified) Hankel transform of $\widehat{G}_n(t)$.

\begin{corollary}\label{cr:transforms}
Suppose $\Re(\psi)>-n$ and $\Re(\a)>-n$. Then for any $\sigma\in\C$,
\begin{equation}\label{eq:Gnpowerint}
\frac{1}{\Gamma(\a)}\int_0^1\frac{\widehat{G}_n(t)}{(1+zt)^{\sigma}}dt
=\frac{1}{\Gamma(\b)(1+z)^{\sigma}}\sum_{j=0}^{\infty}\frac{(\sigma)_jz^j}{(n+j)!(1+z)^j}
{}_{p+1}F_p\left.\left(\begin{matrix}\a,-n-j\\ \b\end{matrix}\right\vert 1\right),
\end{equation}
where the series on the right hand side converges in the half-plane $\Re(z)>-1/2$.
Further,
\begin{equation}\label{eq:Gnexp}
\frac{1}{\Gamma(\a)}\int_0^1\widehat{G}_n(t)e^{-zt}dt=\frac{e^{-z}}{\Gamma(\b)}\sum_{j=0}^{\infty}\frac{z^j}{(n+j)!}
 {}_{p+1}F_p\left(\left.\begin{matrix}\a,-n-j\\\b\end{matrix}\right\vert 1\right)
\end{equation}
and
\begin{equation}\label{eq:GnBessel}
\frac{1}{\Gamma(\a)}\int_0^1\widehat{G}_n(t){}_0F_{1}(-;\nu;-zt)dt=\frac{1}{\Gamma(\b)}\sum\limits_{j=0}^{\infty}\frac{z^j{}_0F_{1}(-;\nu+j;-z)}{(n+j)!(\nu)_{j}}
{}_{p+1}F_p\left.\left(\begin{matrix}-n-j,\a\\ \b\end{matrix}\right\vert 1\right)
\end{equation}
for all $z\in\C$ and $\nu\in\C$.
\end{corollary}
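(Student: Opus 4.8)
The plan is to obtain the three formulas as immediate specializations of Proposition~\ref{lm:Gnhypergeomint}, using only the elementary evaluations ${}_1F_0(\sigma;-;w)=(1-w)^{-\sigma}$ and ${}_0F_0(-;-;w)=e^{w}$ together with the definition of ${}_0F_1$. In each case the hypothesis $u\le s+1$ of Proposition~\ref{lm:Gnhypergeomint} holds automatically, while the restrictions $\Re(\psi)>-n$ and $\Re(\a)>-n$ are precisely those assumed in the corollary, so no additional conditions are needed.

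First, to get (\ref{eq:Gnpowerint}) I would set $u=1$, $s=0$, $\c=(\sigma)$ in (\ref{eq:Gnhypergeomint}). Then ${}_1F_0(\sigma;-;-zt)=(1+zt)^{-\sigma}$ on the left, and on the right the generic factor becomes ${}_1F_0(\sigma+j;-;-z)=(1+z)^{-\sigma-j}$; factoring $(1+z)^{-\sigma}$ out of the sum reproduces (\ref{eq:Gnpowerint}). Since here $u=s+1$, the convergence half-plane $\Re(z)>-\tfrac12$ and the analytic-continuation interpretation carry over verbatim from Proposition~\ref{lm:Gnhypergeomint}; note also that $\Re(1+zt)=1+t\Re(z)>\tfrac12$ for $t\in[0,1]$ when $\Re(z)>-\tfrac12$, so $(1+zt)^{-\sigma}$ stays on its principal branch and the integrand is unambiguous. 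For (\ref{eq:Gnexp}) I would take $u=s=0$, i.e. empty $\c$ and $\d$: then ${}_0F_0(-;-;-zt)=e^{-zt}$, all empty Pochhammer symbols equal $1$, ${}_0F_0(-;-;-z)=e^{-z}$, and (\ref{eq:Gnhypergeomint}) collapses to (\ref{eq:Gnexp}). For (\ref{eq:GnBessel}) I would take $u=0$, $s=1$, $\d=(\nu)$ with $-\nu\notin\N_0$ so that ${}_0F_1$ is defined; then $(\d)_j=(\nu)_j$ and the generic factor becomes ${}_0F_1(-;\nu+j;-z)$, giving (\ref{eq:GnBessel}). In the last two cases $u\le s$, so by Proposition~\ref{lm:Gnhypergeomint} the right-hand series converges for every $z\in\C$; the special value $\nu=\tfrac12$ turns the kernel into $\cos(2\sqrt{zt})$, exhibiting a cosine-type transform, while general $\nu$ yields a (slightly modified) Hankel transform of $\widehat{G}_n$.

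There is no real obstacle here beyond routine bookkeeping: one only has to confirm that the closed-form evaluations of ${}_1F_0$ and ${}_0F_0$ are legitimate — which they are, by the binomial and exponential series — and that the parameter constraints of Proposition~\ref{lm:Gnhypergeomint} are met in each instance, which they are since they coincide with the hypotheses of the corollary. Everything else is term-by-term substitution into (\ref{eq:Gnhypergeomint}).
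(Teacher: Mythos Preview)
Your proposal is correct and matches the paper's own approach: the corollary is presented there precisely as the specializations $u=1,s=0,\c=(\sigma)$; $u=s=0$; and $u=0,s=1,\d=(\nu)$ of Proposition~\ref{lm:Gnhypergeomint}, with the same elementary identifications ${}_1F_0(\sigma;-;-w)=(1+w)^{-\sigma}$ and ${}_0F_0(-;-;-w)=e^{-w}$. Your added remark that one needs $-\nu\notin\N_0$ for (\ref{eq:GnBessel}) to make sense is a valid caveat that the paper's statement glosses over.
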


\begin{corollary}\label{cr:summation}
The following summation formula holds:
\begin{equation}\label{eq:FFsummation}
{}_{u+p}F_{s+p}\!\left.\left(\begin{matrix}\a,\c\\\b,\d\end{matrix}\right\vert -\!z\!\right)
=\sum_{j=0}^{\infty}\frac{(\c)_jz^j}{(\d)_jj!}
{}_uF_{s}\!\left.\left(\begin{matrix}\c+j\\\d+j\end{matrix}\right\vert -\!z\!\right)
{}_{p+1}F_p\left.\!\left(\begin{matrix}-j,\a\\\b\end{matrix}\right\vert 1\!\right),
\end{equation}
where the series converges for $z\in\C$ if $u\le{s}$ and for $\Re(z)>-1/2$ if $u=s+1$.
\end{corollary}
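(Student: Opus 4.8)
The plan is to read off (\ref{eq:FFsummation}) as the case $n=0$ of Proposition~\ref{lm:Gnhypergeomint}. Since $\widehat{G}_0(t)=G_0(t)=G^{p,0}_{p,p}(t\,|\,\b-1;\a-1)$ and $(0+j)!=j!$, the right-hand side of (\ref{eq:Gnhypergeomint}) at $n=0$ is exactly $\Gamma(\b)^{-1}$ times the right-hand side of (\ref{eq:FFsummation}). Hence everything reduces to showing that the left-hand side of (\ref{eq:Gnhypergeomint}) at $n=0$, namely $\Gamma(\a)^{-1}\!\int_0^1{}_uF_s(\c;\d;-zt)G_0(t)\,dt$, equals $\Gamma(\b)^{-1}\,{}_{u+p}F_{s+p}(\a,\c;\b,\d;-z)$ (the ${}_{u+p}F_{s+p}$ on the left being understood as the analytic continuation when $u=s+1$).

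First I would work under the temporary restrictions $\Re(\a)>0$ and $\Re(\psi)>0$, which make Proposition~\ref{lm:Gnhypergeomint} applicable at $n=0$ and guarantee integrability of $G_0$ on $(0,1)$. Expanding ${}_uF_s(\c;\d;-zt)$ in powers of $t$ and integrating term by term — the interchange of sum and integral being justified precisely as in the proof of Proposition~\ref{lm:Gnhypergeomint}, the delicate point again being the behaviour near $t=1$ when $u=s+1$ — reduces the left-hand side to $\Gamma(\a)^{-1}\sum_{k\ge0}\frac{(\c)_k(-z)^k}{(\d)_kk!}\int_0^1 t^kG_0(t)\,dt$. The Mellin transform (\ref{eq:GMellin}), applied with shifted parameters $\a-1,\b-1$ and exponent $s=k+1$, gives $\int_0^1 t^kG_0(t)\,dt=\Gamma(\a+k)/\Gamma(\b+k)$, so the series collapses to $\Gamma(\b)^{-1}\sum_{k\ge0}\frac{(\a)_k(\c)_k(-z)^k}{(\b)_k(\d)_kk!}=\Gamma(\b)^{-1}\,{}_{u+p}F_{s+p}(\a,\c;\b,\d;-z)$. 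Cancelling $\Gamma(\b)^{-1}$ in the resulting equality yields (\ref{eq:FFsummation}) whenever $\Re(\a)>0$ and $\Re(\psi)>0$.

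It remains to drop these restrictions. For fixed $\c$, $\d$ and $z$ in the stated convergence region, both members of (\ref{eq:FFsummation}) are holomorphic in $\a$ and meromorphic in $\b$ on a neighbourhood of the non-empty open set $\{\Re(\a)>0,\ \Re(\psi)>0\}$: on the left this is clear, and on the right each ${}_{p+1}F_p(-j,\a;\b;1)$ is a polynomial in $\a$ and a rational function of $\b$, while the series converges locally uniformly because the Krottnerus estimates for ${}_uF_s(\c+j;\d+j;-z)$ and the at most polynomial growth of ${}_{p+1}F_p(-j,\a;\b;1)$ in $j$ — the same ingredients used in the proof of Proposition~\ref{lm:Gnhypergeomint} — hold uniformly for $(\a,\b)$ in compact sets. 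Since the two sides agree on that open set, they agree wherever both are defined, and the convergence statements for the right-hand series are inherited verbatim from Proposition~\ref{lm:Gnhypergeomint}.

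I do not expect a genuine obstacle here; the two points requiring attention are the termwise integration near $t=1$ for $u=s+1$ and the analytic-continuation step, both routine given the earlier results. As an aside, matching the coefficient of $(\c)_k(-z)^k/((\d)_kk!)$ on both sides of (\ref{eq:FFsummation}) — after expanding ${}_uF_s(\c+j;\d+j;-z)$ on the right and using $(\c)_j(\c+j)_l=(\c)_{j+l}$, $(\d)_j(\d+j)_l=(\d)_{j+l}$ — shows that (\ref{eq:FFsummation}) is equivalent to the purely finite identity $\sum_{j=0}^{k}(-1)^j\binom{k}{j}{}_{p+1}F_p(-j,\a;\b;1)=(\a)_k/(\b)_k$ for all $k\in\N_0$, which is itself immediate from comparing the two evaluations of $m_k=\int_0^1G_0(t)t^k\,dt$, one being $\Gamma(\a+k)/\Gamma(\b+k)$ via (\ref{eq:GMellin}) and the other the $r=0$, $n=0$ instance of (\ref{eq:Deltamgk}); either route gives a self-contained proof.
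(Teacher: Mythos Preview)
Your proposal is correct and follows essentially the same route as the paper: set $n=0$ in Proposition~\ref{lm:Gnhypergeomint} and identify the left-hand integral $\Gamma(\a)^{-1}\int_0^1{}_uF_s(\c;\d;-zt)G_0(t)\,dt$ with $\Gamma(\b)^{-1}{}_{u+p}F_{s+p}(\a,\c;\b,\d;-z)$. The paper dispatches that identification in one line by citing \cite[Theorem~1]{KarpJMS}, whereas you reprove it by termwise integration against the Mellin transform (\ref{eq:GMellin}) and then extend by analytic continuation in $\a,\b$; your added remark deriving the equivalent finite identity $\sum_{j=0}^{k}(-1)^j\binom{k}{j}{}_{p+1}F_p(-j,\a;\b;1)=(\a)_k/(\b)_k$ is a pleasant self-contained alternative not in the paper.
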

\begin{proof}
Take $n=0$ in (\ref{eq:Gnhypergeomint}) and apply \cite[Theorem~1]{KarpJMS}.
\end{proof}

If $u=s+1=1$ in (\ref{eq:FFsummation}) we get N{\o}rlund's formula \cite[(1.21)]{Norlund} (see also \cite[formula~6.8.1.3]{PBM3}), which generalizes  Pfaff's
transformation \cite[(2.2.6)]{AAR} for ${}_2F_1$.  Taking $u=s=0$ we obtain the  summation formula \cite[formula~6.8.1.2]{PBM3} generalizing Kummer's
transformation  \cite[(4.1.11)]{AAR} for ${}_1F_1$.  Finally, if $s=u+1=1$ we get the known summation formula \cite[formula~6.8.3.4]{PBM3}.

The next elementary lemma on sign stabilization of the Riemann-Liouville fractional integral may be of independent
interest.
\begin{lemma}\label{lm:fractional}
Suppose  $f:(0,1]\to\R$ is continuous and integrable \textup{(}possibly in improper sense\textup{)}.  If $f>0$ in some  neighborhood of zero, then
there exists $\alpha>0$ such that the Riemann-Liouville fractional integral
$$
[I_{+}^{\alpha}f](x)=\frac{1}{\Gamma(\alpha)}\int\nolimits_{0}^{x}f(t)(x-t)^{\alpha-1}dt
$$
is positive for all $x\in(0,1]$.
\end{lemma}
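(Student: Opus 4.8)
The plan is to exploit the shape of the kernel: on $[0,x]$ the weight $(x-t)^{\alpha-1}$ is a decreasing function of $t$, and for large $\alpha$ it is enormously larger near $t=0$ than near $t=x$. Hence the fractional integral is governed by the values of $f$ near the origin, which is exactly where $f$ is positive. The whole proof is a quantitative version of this heuristic, with the only genuine care needed being uniformity in $x$.

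First I would fix $\delta\in(0,1]$ with $f(t)>0$ on $(0,\delta]$. If $\delta=1$ the integrand $f(t)(x-t)^{\alpha-1}$ is positive on $(0,x)$ for every $x$, so $[I_{+}^{\alpha}f](x)>0$ for any $\alpha\ge1$ (integrability of $f$ together with $(x-t)^{\alpha-1}\le1$ guarantees absolute convergence), and we are done. So assume $\delta<1$. The same remark shows $[I_{+}^{\alpha}f](x)>0$ for \emph{all} $x\in(0,\delta]$ and any $\alpha\ge1$, so the only work is the range $x\in(\delta,1]$.

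For $x\in(\delta,1]$ I would split $\int_0^x=\int_{(0,\delta]}+\int_{(\delta,x]}$. On $(0,\delta]$ the integrand is positive; more quantitatively, by continuity and compactness there is an interval $[t_*-\epsilon_0,t_*+\epsilon_0]\subset(0,\delta)$ (say around $t_*=\delta/2$) on which $f\ge m>0$, and since the kernel is decreasing in $t$ this yields $\int_{(0,\delta]}f(t)(x-t)^{\alpha-1}dt\ge 2m\epsilon_0\,(x-t_*-\epsilon_0)^{\alpha-1}$. On $(\delta,x]$ the integrand may be negative, but its absolute value is at most $(x-\delta)^{\alpha-1}\int_0^1|f(t)|dt$, again because the kernel is maximized at $t=\delta$. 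Writing $M:=\int_0^1|f(t)|dt\in(0,\infty)$ and factoring out $(x-\delta)^{\alpha-1}>0$, one gets
\[
\Gamma(\alpha)\,[I_{+}^{\alpha}f](x)\ \ge\ (x-\delta)^{\alpha-1}\!\left[\,2m\epsilon_0\!\left(\frac{x-t_*-\epsilon_0}{x-\delta}\right)^{\!\alpha-1}-\,M\,\right].
\]

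The decisive point — and the step to be careful about — is the behaviour of the ratio $\rho(x):=(x-t_*-\epsilon_0)/(x-\delta)$ on $(\delta,1]$. Since $t_*+\epsilon_0<\delta$, both numerator and denominator are positive and the numerator is strictly larger, so $\rho(x)>1$; moreover $\rho$ is decreasing in $x$ and blows up as $x\downarrow\delta$, so $\inf_{x\in(\delta,1]}\rho(x)=\rho(1)=\lambda:=(1-t_*-\epsilon_0)/(1-\delta)>1$, a constant independent of $x$. Also $x-t_*-\epsilon_0>\delta-t_*-\epsilon_0>0$ throughout, so no factor degenerates. Consequently $\lambda^{\alpha-1}\to\infty$, and choosing $\alpha\ge1$ large enough that $2m\epsilon_0\lambda^{\alpha-1}>M$ makes the bracket positive for every $x\in(\delta,1]$; hence $[I_{+}^{\alpha}f](x)>0$ there, and therefore on all of $(0,1]$. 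The main obstacle is precisely this uniformity: one must verify that the comparison constant $\lambda$ neither degenerates as $x\downarrow\delta$ nor as $x$ approaches $1$, which is why the explicit monotonicity and the explicit infimum $\rho(1)$ are recorded rather than merely invoking a limit.
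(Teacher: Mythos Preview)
Your proof is correct and follows essentially the same strategy as the paper's: split the integral into the region near the origin where $f>0$ and the remaining tail, then use that the ratio of kernel values on the two pieces is bounded below by a constant $\lambda>1$ uniformly in $x$, so the positive contribution dominates once $\alpha$ is large. The paper's version differs only in packaging --- it substitutes $t=xu$, integrates the kernel $(1-u)^{\alpha-1}$ explicitly, and treats the tail via the mean value theorem rather than a supremum bound --- but the mechanism and the crucial uniformity in $x$ are the same, and you are in fact more explicit about the latter than the paper is.
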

\begin{proof}  It follows from the conditions of the lemma that there exist $0<t_0<t_1\le1$
such that $f(t)\ge\delta$ on $[t_0,t_1]$ for some $\delta>0$. Then if $x\le{t_1}$ the claim is obviously true for all $\alpha\ge1$.
Assume that $x>t_1$ and estimate
\begin{multline*}
\int\nolimits_{0}^{x}f(t)(x-t)^{\alpha-1}dt=x^{\alpha}\int\nolimits_{0}^{1}f(xu)(1-u)^{\alpha-1}du
\\
>x^{\alpha}\int\nolimits_{t_0/x}^{t_1/x}f(xu)(1-u)^{\alpha-1}du+x^{\alpha}\int\nolimits_{t_1/x}^{1}f(xu)(1-u)^{\alpha-1}du
\\
\ge x^{\alpha}\delta\int\nolimits_{t_0/x}^{t_1/x}(1-u)^{\alpha-1}du+x^{\alpha}f(t_{\alpha})\int\nolimits_{t_1/x}^{1}(1-u)^{\alpha-1}du
\\
=\frac{{\delta}x^{\alpha}}{\alpha}\left((1-t_0/x)^{\alpha}-(1-t_1/x)^{\alpha}\right)
+\frac{x^{\alpha}f(t_{\alpha})}{\alpha}(1-t_1/x)^{\alpha}
\\
=\frac{x^{\alpha}(1-t_0/x)^{\alpha}}{\alpha}\left\{\delta+(f(t_{\alpha})-\delta)\left(\frac{x-t_1}{x-t_0}\right)^{\!\alpha}\right\},
\end{multline*}
where we applied the mean value theorem to the second integral on the second line,
so that $t_{\alpha}\in[t_1,x]$.  The second term inside the braces clearly tends to zero
as $\alpha\to\infty$ and positivity follows.
\end{proof}

\textbf{Remark.}  This lemma admits an obvious generalization as follows.
Since $I_{+}^{\alpha}f=I_{+}^{\alpha_2}I_{+}^{\alpha_1}f$ for $\alpha=\alpha_1+\alpha_2$ it is sufficient to assume
that the conditions of the lemma hold for $I_{+}^{\alpha_1}f$ for some $\alpha_1\ge0$.

\smallskip

The above lemma leads to the following statement regarding the sign stabilization of $\widehat{G}_n/\Gamma(\a)$
as $n$ grows to infinity.

\begin{propos}\label{pr:signG}
For arbitrary real vectors $\a$, $\b$ there exists $N\in\N_0$ such that
$$
\frac{(-1)^{\eta}}{\Gamma(\a)}G^{p,1}_{p+1,p+1}\left(t\left|\begin{array}{l}\!\!n,\b+n-1\!\!\\\!\!\a+n-1,0\!\!\end{array}\right.\right)>0
$$
for all $n\ge{N}$,  $t\in(0,1]$ and $\eta$ given in \emph{(\ref{eq:et})}.
\end{propos}
\begin{proof}   By Proposition~\ref{lm:Gnfracintegral} for $n=k+m$ we have
$$
\frac{1}{\Gamma(\a)}G^{p,1}_{p+1,p+1}\left(t\left|\begin{array}{l}\!\!n,\b+n-1\!\!\\\!\!\a+n-1,0\!\!\end{array}\right.\right)
=I_+^{k}\left[\frac{1}{\Gamma(\a)}G^{p,1}_{p+1,p+1}\left(\cdot\left|\begin{array}{l}\!\!m,\b+m-1\!\!\\\!\!\a+m-1,0\!\!\end{array}\right.\right)\right](t).
$$
On the other hand, according to Proposition~\ref{lm:Gn-asymp} there exists $m\in\N_0$ such that
$$
\frac{(-1)^{\eta}}{\Gamma(\a)}G^{p,1}_{p+1,p+1}\left(t\left|\begin{array}{l}\!\!m,\b+m-1\!\!\\\!\!\a+m-1,0\!\!\end{array}\right.\right)
$$
satisfies the conditions of Lemma~\ref{lm:fractional} and the claim follows.
\end{proof}

Recall that a sequence $\{f_k\}_{k\ge0}$ is completely monotonic if $(-1)^m\Delta^m{f_k}\ge0$ for all integer
$m,k\ge0$.  By the celebrated result of Hausdorff, the necessary and sufficient conditions for a sequence to be completely monotone is that it is equal to the moment sequence of a nonnegative measure supported on $[0,1]$.  In view of this fact we get the next corollary of Proposition~\ref{pr:signG}.

\begin{corollary}\label{cr:Hasdorff}
For arbitrary real vectors $\a$, $\b$ there exists $N\in\N_0$, such that for all $n\ge{N}$ both
sequences $(-1)^{\eta}m_k/\Gamma(\a)$ and $(-1)^{\eta}\hat{m}_k/\Gamma(\a)$ defined in $(\ref{eq:gk})$ and $(\ref{eq:bargk})$,
respectively, are completely monotonic.  Here $\eta$ is given in \emph{(\ref{eq:et})}.
\end{corollary}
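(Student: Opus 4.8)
The strategy is to realize the two sequences as Hausdorff moment sequences of one and the same nonnegative measure and then invoke the Hausdorff characterization recalled just before the statement. First I would apply Proposition~\ref{pr:signG} to produce $N\in\N_0$, enlarging it if necessary so that in addition $N>-\min(\Re(\psi),\Re(\a))$ and $\eta$ has already stabilized, with the property that for every $n\ge N$ the function $(-1)^{\eta}\widehat{G}_n(t)/\Gamma(\a)$ is positive on $(0,1]$; a single value of $\eta$ serves the whole tail $n\ge N$ by the remark following~(\ref{eq:et}). For such $n$ the asymptotic statements of Propositions~\ref{lm:Gn-asymp} and~\ref{lm:Gn-asymp1} show that $\widehat{G}_n$ extends continuously to $[0,1]$ (it is $\O(t^{\widehat{a}_1+n-1})$ as $t\to0$ and bounded as $t\to1$ since $\Re(\psi)+n-1>0$), so that
$$
d\mu_n(t):=\frac{(-1)^{\eta}}{\Gamma(\a)}\,\widehat{G}_n(t)\,dt
$$
is a finite nonnegative Borel measure on $[0,1]$.

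The remaining step is bookkeeping. By the integral definition in~(\ref{eq:gk}),
$$
\frac{(-1)^{\eta}m_k}{\Gamma(\a)}=\int_0^1 t^k\,d\mu_n(t)\qquad(k\in\N_0),
$$
so this is precisely the moment sequence of $\mu_n$, and hence completely monotonic by Hausdorff's theorem. For the second sequence I would push $\mu_n$ forward under the reflection $t\mapsto 1-t$ of $[0,1]$ onto itself, obtaining a finite nonnegative measure $\nu_n$ on $[0,1]$; then~(\ref{eq:bargk}) gives
$$
\frac{(-1)^{\eta}\hat{m}_k}{\Gamma(\a)}=\int_0^1 (1-t)^k\,d\mu_n(t)=\int_0^1 s^k\,d\nu_n(s),
$$
so a second appeal to Hausdorff's theorem finishes the proof. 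If one prefers to avoid the ``only if'' direction of Hausdorff's result, the same conclusion follows from the elementary finite-difference computation used in the proof of Proposition~\ref{lm:Gnmoments}: expanding binomially one gets $(-1)^{m}\Delta^{m}\bigl[(-1)^{\eta}m_k/\Gamma(\a)\bigr]=\int_0^1 t^k(1-t)^m\,d\mu_n(t)\ge0$ and $(-1)^{m}\Delta^{m}\bigl[(-1)^{\eta}\hat{m}_k/\Gamma(\a)\bigr]=\int_0^1 t^m(1-t)^k\,d\mu_n(t)\ge0$ for all $k,m\in\N_0$, which is the definition of complete monotonicity.

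There is no genuine obstacle in this corollary: its entire content is the positivity assertion of Proposition~\ref{pr:signG}, which has already been established. The only points worth a line of care — and settled above — are verifying that $\mu_n$ is a finite measure so that Hausdorff's criterion is applicable, and observing that one value of $\eta$ together with one threshold $N$ can be chosen uniformly in $n$; both are immediate from the cited asymptotics and the $n$-independence of $\eta$ noted after~(\ref{eq:et}).
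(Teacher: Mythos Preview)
Your argument is correct and follows exactly the route the paper intends: the corollary is presented there as an immediate consequence of Proposition~\ref{pr:signG} via Hausdorff's theorem, and you have simply written out the details (finiteness of $\mu_n$, stability of $\eta$, the optional direct finite-difference verification). Nothing further is needed.
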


It is then natural to formulate the next

\smallskip

\noindent\textbf{Open problem.} \emph{How to find or estimate $N$ in Proposition~\ref{pr:signG} and Corollary~\ref{cr:Hasdorff}?}

\smallskip

We conclude this section with some examples of the the function $\widehat{G}_n(t)$ for small $p$.

\smallskip

\textbf{Example~1}. Take the simplest case $p=1$.  According to \cite[8.4.49.19]{PBM3}
$$
G^{1,1}_{2,2}\left(t\left|\begin{array}{l}\!\!n,b+n-1\!\!\\\!\!a+n-1,0\!\!\end{array}\right.\right)
=\frac{t^{a+n-1}}{\Gamma(\psi)(a)_n}{_2F_1}\!\!\left(\!\!\begin{array}{c}a,1-\psi\\a+n\end{array}\!\!\vline\,t\!\right)
$$
for $0<t<1$ and all values of $a$ and $\psi=b-a$.  In particular, this function vanishes for $-\psi\in\N_0$.
Substituting the above expression in (\ref{eq:GGF}) and using (\ref{eq:Meijer2reduced}) for $G^{2,0}_{2,2}$ we
arrive at the next identity:
$$
\frac{x^{a+n-1}}{\Gamma(\psi)(a)_n}{_2F_1}\!\!\left(\!\!\!\begin{array}{c}a,1-\psi\\a+n\end{array}\!\!\vline\,x\!\right)
\!=\!\frac{(-1)^n(1-x)^{\psi+n-1}}{\Gamma(\psi+n)}{_2F_1}\!\!\left(\!\!\!\begin{array}{c}1-a,\psi\\\psi+n\end{array}\!\!\vline\,1-x\!\right)
+\frac{x^{n-1}\Gamma(a)}{(n-1)!\Gamma(b)}{_2F_1}\!\!\left(\!\!\!\begin{array}{c}a,1-n\\b\end{array}\!\!\vline\,\frac{1}{x}\!\right).
$$
Certainly, there are many other ways to prove the above formula, but it does not seem to appear in the literature in this form.

\medskip

\textbf{Example~2}. For $p=2$ and  $-\psi=a_1+a_2-b_1-b_2\notin\N_0$ we have by \cite[8.4.49.22]{PBM3}:
\begin{equation}\label{eq:Meijer2reduced}
G^{2,0}_{2,2}\!\left(\!x~\vline\begin{array}{l}b_1,b_2
\\a_1,a_2\end{array}\!\!\right)=\frac{x^{a_2}(1-x)^{\psi-1}_{+}}{\Gamma(\psi)}
{_2F_{1}}\!\left(\!\!\begin{array}{c}b_1-a_1,b_2-a_1
\\\psi\end{array}\!\!\vline\,1-x\right),
\end{equation}
where $a_1$ and $a_2$ may be interchanged on the right hand side.  If $\psi=-m$, $m\in\N_0$,
an easy calculation based on (\ref{eq:Meijer2reduced}) leads to
$$
G^{2,0}_{2,2}\!\left(\!x~\vline\begin{array}{l}b_1,b_2
\\a_1,a_2\end{array}\!\!\right)\!=\!\frac{x^{a_2}(b_1-a_1)_{m+1}(b_2-a_1)_{m+1}}{(m+1)!}
{_2F_{1}}\!\left(\!\!\begin{array}{c}b_1-a_1+m+1,b_2-a_1+m+1
\\m+2\end{array}\!\!\vline\,1-x\right)
$$
for $t\in(0,1)$. Further, using a variation of Euler's integral representation
\begin{equation}\label{eq:fractint}
\int_0^t x^{a_2-1}(1-x)^{\psi-1+k}(t-x)^{n-1}dx=\frac{\Gamma(n)\Gamma(a_2)}{\Gamma(a_2+n)}t^{a_2-1+n}
{_2F_{1}}\!\left(\!\!\begin{array}{c}a_2,1-\psi-k
\\a_2+n\end{array}\!\!\vline\,t\right),
\end{equation}
we get for $-\psi\notin\N_0$ by termwise integration:
\begin{subequations}
\begin{multline}\label{eq:G2133-1}
G^{2,1}_{3,3}\!\left(t\left|\begin{array}{l}\!\!n,\b+n-1\!\!\\\!\!\a+n-1,0\!\!\end{array}\right.\right)
=\frac{1}{(n-1)!}\int_0^t(t-x)^{n-1}G^{2,0}_{2,2}\!\left(\!x~\vline\begin{array}{l}\b-1
\\\a-1\end{array}\!\!\right)dx
\\
=\frac{t^{a_2-1+n}}{\Gamma(\psi)(a_2)_n}\sum\limits_{k=0}^{\infty}\frac{(b_1-a_1)_k(b_2-a_1)_k}{(\psi)_kk!}
{_2F_{1}}\!\left(\!\!\begin{array}{c}a_2,1-\psi-k\\a_2+n\end{array}\!\!\vline\,t\right),
\end{multline}
where we utilized (\ref{eq:Meijer2reduced}) and (\ref{eq:fractint}).  Invoking \cite[7.2(24)]{LukeBook} it is easy to see that the above series converges in the unit disk
if $\Re(a_1)<1$. Exchanging the order of summations in the last series and applying the Euler transformation \cite[(2.2.7)]{AAR} we get an alternative expression:
\begin{multline}\label{eq:G2133-2}
G^{2,1}_{3,3}\!\left(t\left|\begin{array}{l}\!\!n,\b+n-1\!\!\\\!\!\a+n-1,0\!\!\end{array}\right.\right)
\\
=\frac{t^{a_2-1+n}(1-t)^{\psi+n-1}}{\Gamma(\psi)(a_2)_n}\sum\limits_{k=0}^{\infty}\frac{(n)_k(a_2+\psi+n-1)_k}{(a_2+n)_kk!}t^k
{_3F_{2}}\!\left(\!\!\!\begin{array}{c}b_1-a_1,b_2-a_1,a_2+\psi+n+k-1\\\psi,a_2+\psi+n-1\end{array}\!\!\vline\,1-t\right)
\end{multline}
converging in the unit disk $|t|<1$ for all values of parameters.
The standard expression for $G^{2,1}_{3,3}$ is the following (see \cite[8.2.2.3]{PBM3} or \cite[16.17.2]{NIST}):
\begin{multline}\label{eq:G2133-3}
G^{2,1}_{3,3}\!\left(t\left|\begin{array}{l}\!\!n,\b+n-1\!\!\\\!\!\a+n-1,0\!\!\end{array}\right.\right)
=\frac{\Gamma(a_2-a_1)t^{a_1+n-1}}{(a_1)_n\Gamma(b_1-a_1)\Gamma(b_2-a_1)}
{_3F_{2}}\!\left(\begin{array}{c}a_1,a_1-b_1+1,a_1-b_2+1\\a_1-a_2+1,a_1+n\end{array}\!\!\vline\,t\right)
\\
+\frac{\Gamma(a_1-a_2)t^{a_2+n-1}}{(a_2)_n\Gamma(b_1-a_2)\Gamma(b_2-a_2)}
{_3F_{2}}\!\left(\begin{array}{c}a_2,a_2-b_1+1,a_2-b_2+1\\a_2-a_1+1,a_2+n\end{array}\!\!\vline\,t\right).
\end{multline}
It is valid if $a_1-a_2\notin\Z$. On the other hand we can apply (\ref{eq:GGF}) to express $G^{2,1}_{3,3}$ in terms of $G^{3,0}_{3,3}$ and ${}_3F_{2}$.
Further, applying (\ref{eq:G3033}) to express
$G^{3,0}_{3,3}$ we get the identity:
\begin{multline}\label{eq:G2133-4}
G^{2,1}_{3,3}\!\left(t\left|\begin{array}{l}\!\!n,\b+n-1\!\!\\\!\!\a+n-1,0\!\!\end{array}\right.\right)
=\frac{(-1)^n(1-t)^{\psi+n-1}}{\Gamma(\psi+n)}\sum\limits_{k=0}^{\infty}\frac{(\psi-b_1+1)_k(\psi-b_2+1)_k}{(\psi+n)_kk!}\times
\\
{_3F_{2}}\!\left(\begin{array}{c}-k,1-a_1,1-a_2\\\psi-b_1+1,\psi-b_2+1\end{array}\!\right)(1-t)^k
+\frac{t^{n-1}\Gamma(\a)}{\Gamma(\b)(n-1)!}
{_3F_{2}}\!\left(\begin{array}{c}1-n,\a\\\b\end{array}\!\!\vline\,\frac{1}{t}\right),
\end{multline}
where the argument $1$ is omitted in ${}_3F_2$ for conciseness. Employing an alternative expression \cite[(8.4.51.2)]{PBM3}
for $G^{3,0}_{3,3}$ in terms of Appell's hypergeometric function $F_3$ of two variables \cite[16.13.3]{NIST}
we get
\begin{multline}\label{eq:G2133-5}
G^{2,1}_{3,3}\!\left(t\left|\begin{array}{l}\!\!n,\b+n-1\!\!\\\!\!\a+n-1,0\!\!\end{array}\right.\right)
=\frac{t^{a_1+a_2-b_1+n-2}(1-t)^{\psi+n-1}_+}{(-1)^n\Gamma(\psi+n)}
F_3(b_1-a_2,n;b_1-a_1,b_2-1;\psi+n;1-1/t,1-t)
\\
+\frac{t^{n-1}\Gamma(\a)}{\Gamma(\b)(n-1)!}
{_3F_{2}}\!\left(\begin{array}{c}1-n,\a\\\b\end{array}\!\!\vline\,\frac{1}{t}\right).
\end{multline}
\end{subequations}
Equating the right hand sides of the formulas (\ref{eq:G2133-1})-(\ref{eq:G2133-5}) we arrive at transformation  formulas for the sums of ${}_3F_2$s and  reduction
formulas for Appell's $F_3$ function. Some of these formulas might be new.

Finally, if $-\psi=m\in\N_0$ we have
\begin{multline*}
G^{2,1}_{3,3}\!\left(t\left|\begin{array}{l}\!\!n,\b+n-1\!\!\\\!\!\a+n-1,0\!\!\end{array}\right.\right)
\\
=\frac{t^{a_2-1+n}(b_1-a_1)_{m+1}(b_2-a_1)_{m+1}}{(m+1)!(a_2)_n}\sum\limits_{k=0}^{\infty}\frac{(b_1-a_1+m+1)_k(b_2-a_1+m+1)_k}{(m+1)_kk!}
{_2F_{1}}\!\left(\begin{array}{l}a_2,-k\\a_2+n\end{array}\!\!\vline\,t\right).
\end{multline*}

\medskip

\textbf{Example~3}. For $p=3$ the coefficients in (\ref{eq:Norlund}) are given by
$$
g_n(\a_{[3]};\b)=\frac{(\psi-b_1+a_3)_n(\psi-b_2+a_3)_n}{n!}{_{3}F_2}\!\left(\!\left.\begin{array}{c}-n,b_3-a_1,b_3-a_2
\\\psi-b_1+a_3,\psi-b_2+a_3\end{array}\right|1\!\right).
$$
See \cite[formula~2.10]{Norlund} or \cite[p.\:48]{KLMeijer1}.
The coefficients $g_n(\a_{[1]};\b)$ and $g_n(\a_{[2]};\b)$ are obtained from the above by permutation of indices. Hence,
\begin{multline}\label{eq:G3033}
G^{3,0}_{3,3}\!\left(\!t~\vline\begin{array}{l}\b-1
\\\a-1\end{array}\!\!\right)=\frac{t^{a_3-1}(1-t)^{\psi-1}_{+}}{\Gamma(\psi)}\times
\\
\sum\limits_{n=0}^{\infty}\frac{(\psi-b_1+a_3)_n(\psi-b_2+a_3)_n}{(\psi)_nn!}{_{3}F_2}\!\left(\!\left.\begin{array}{c}-n,b_3-a_1,b_3-a_2
\\\psi-b_1+a_3,\psi-b_2+a_3\end{array}\right|1\!\right)(1-t)^n
\end{multline}
for $-\psi=a_1+a_2+a_3-b_1-b_2-b_3\notin\N_0$ and
\begin{multline*}
G^{3,0}_{3,3}\!\left(\!t~\vline\begin{array}{l}\b-1
\\\a-1\end{array}\!\!\right)=t^{a_3-1}\frac{(b_2+b_3-a_1-a_2)_{m+1}(b_1+b_3-a_1-a_2)_{m+1}}{(m+1)!}\times
\\
\sum\limits_{n=0}^{\infty}\frac{(1-b_1+a_3)_{n}(1-b_2+a_3)_{n}}{(m+2)_nn!}
{_{3}F_2}\left(\left.\!\!\begin{array}{c}-n-m-1,b_3-a_1,b_3-a_2
\\-m-b_1+a_3,-m-b_2+a_3\end{array}\right|1\!\right)(1-t)^n
\end{multline*}
for $-\psi=m\in\N_0$. Termwise integration and  application of (\ref{eq:fractint}) lead to
\begin{multline*}
G^{3,1}_{4,4}\!\left(\!t~\vline\begin{array}{l}n,\b+n-1
\\\a+n-1,0\end{array}\!\!\right)
\\
=
\frac{t^{a_3-1+n}}{(a_3)_n}\sum\limits_{k=0}^{\infty}\frac{(\psi-b_1+a_3)_k(\psi-b_2+a_3)_k}{\Gamma(\psi+k)k!}
{_{3}F_2}\left(\left.\!\!\begin{array}{c}-k,b_3-a_1,b_3-a_2
\\
\psi-b_1+a_3,\psi-b_2+a_3\end{array}\right|1\!\right)
{_2F_{1}}\!\left(\begin{array}{c}a_3,1-\psi-k
\\a_3+n\end{array}\!\!\vline\,t\right),
\end{multline*}
for $-\psi\notin\N_0$ and
\begin{multline*}
G^{3,1}_{4,4}\!\left(\!t~\vline\begin{array}{l}n,\b+n-1
\\\a+n-1,0\end{array}\!\!\right)
=\frac{(b_2+b_3-a_1-a_2)_{m+1}(b_1+b_3-a_1-a_2)_{m+1}t^{a_3-1+n}}{(m+1)!(a_3)_n}\times
\\
\sum\limits_{k=0}^{\infty}\frac{(1-b_1+a_3)_{k}(1-b_2+a_3)_{k}}{(m+2)_kk!}
{_{3}F_2}\left(\left.\!\!\begin{array}{c}-k-m-1,b_3-a_1,b_3-a_2
\\-m-b_1+a_3,-m-b_2+a_3\end{array}\right|1\!\right)
{_2F_{1}}\!\left(\begin{array}{c}a_3,-k
\\a_3+n\end{array}\!\!\vline\,t\right),
\end{multline*}
for $-\psi=m\in\N_0$.

Here, we again can employ (\ref{eq:GGF}) to express $G^{3,1}_{4,4}$ in terms of $G^{4,0}_{4,4}$ and ${}_4F_{3}$.
Next, we can write an explicit expansion for $G^{4,0}_{4,4}$ using (\ref{eq:Norlund}) and \cite[formula above (15)]{KLMeijer1} for the coefficients
$g_{j}(\a_{[k]};\b)$. Comparing the resulting expression with the one above leads to further transformation formulas for double sums
of hypergeometric functions. As these formulas are quite cumbersome we omit them here.

\section{A distribution generalizing $G_0$ by regularization at unity}

Define $\CB^{\infty}[0,1]$ to be the class of functions on $[0,1]$ that have derivatives of all orders which are all bounded on $[0,1]$.
If $\varphi\in\CB^{\infty}[0,1]$, then the integral (\ref{eq:Gphiintegral}) which we repeat for convenience here,
\makeatletter
\newcounter{tempcount}
\c@tempcount=\c@equation
\c@equation=0
\begin{equation}
\int_0^1\!\!G_0(t)\varphi(t)dt,~~\text{where}~~
G_0(t)=G^{p,0}_{p,p}\!\left(t\left\vert\begin{matrix}\b-1\\\a-1\end{matrix}\right.\right),
\end{equation}
\c@equation=\c@tempcount
\makeatother
converges (i.e. exists as an improper integral) if the next two conditions are satisfied:
\begin{equation*}\label{eq:a-psi-condition}
a=\min(\Re{a_1},\Re{a_2},\ldots,\Re{a_p})>0~~\text{and}~~\Re(\psi)=\Re\!\left[\sum\nolimits_{k=1}^{p}(b_k-a_k)\right]>0.
\end{equation*}
It also converges if the second condition is replaced by $\psi=0,-1,-2,\ldots$  These claims are immediate from the properties of $G_0(t)$
elaborated in \cite[section~2]{KLMeijer1}.  To convert the set $\CB^{\infty}[0,1]$ into a test function space, define the convergence in $\CB^{\infty}[0,1]$ as follows:
the sequence $\varphi_j$  converges to an element $\varphi\in\CB^{\infty}[0,1]$ if
$$
\max\limits_{x\in[0,1]}|\varphi^{(k)}_j(x)-\varphi^{(k)}(x)|\to0~\text{as}~j\to\infty
$$
for each nonnegative integer $k$. This space can be viewed as a space of restrictions of smooth periodic functions (say with period $2$) considered in \cite[Chapter~3, paragraph~2]{Beals} to the interval $[0,1]$. Then it follows from \cite[Theorem~2.1]{Beals} that this space is complete.  In this section we will define a regularization of the integral (\ref{eq:Gphiintegral}) assigning a finite value to it for arbitrary values of $a$ and $\psi$. Our regularization coincides with the analytic continuation in parameters $\a$ and $\b$, so that its application to the generalized Stieltjes, exponential or cosine Fourier kernel leads expectedly to the generalized hypergeometric functions.

\medskip

\noindent\textbf{Definition~1.} \emph{For arbitrary complex $\a$ and $\b$, $-\b\notin\N_0$,  choose a nonnegative integer $n>-\min(a,\Re(\psi))$, where $a$ and $\psi$
are given in \textup{(\ref{eq:a-psi-defined})}. Define a regularization of the integral \textup{(\ref{eq:Gphiintegral})} as the distribution $\G_1=\G_1(\a,\b)$
acting on a test function $\varphi\in\CB^{\infty}[0,1]$ according to the formula
\begin{equation}\label{eq:actionG}
\langle\G_1,\varphi\rangle=\sum_{k=0}^{n-1}{(-1)^k\varphi^{(k)}(1)\over k!}
{}_{p+1}F_p\left.\left(\begin{matrix}\a,-k \\\b\end{matrix}\right\vert 1\right)+{(-1)^{n}\Gamma(\b)\over\Gamma(\a)}\int_0^1\widehat{G}_n(t)\varphi^{(n)}(t)dt,
\end{equation}
where $\widehat{G}_n(t)$ is given in \textup{(\ref{eq:Gn-defined})}. If $n=0$ the finite sum in \textup{(\ref{eq:actionG})} is understood to be empty,
so that \textup{(\ref{eq:actionG})} reduces to  a multiple of \textup{(\ref{eq:Gphiintegral})}.
}

The asymptotic properties of $\widehat{G}_n(t)$ (at $t\to0$ and $t\to1$) contained in Propositions \ref{lm:Gn-asymp1} and  \ref{lm:Gn-asymp} show the correctness of
the above definition: the integral in (\ref{eq:actionG}) exists as a finite number for all $\varphi\in\CB^{\infty}[0,1]$ under the conditions stated in the definition.

When $n>0$ the above definition is motivated by the following argument.  Replace $\varphi(t)$ in (\ref{eq:Gphiintegral}) by its Taylor expansion at $t=1$:
$$
\varphi(t)=\sum_{k=0}^{n-1}\frac{\varphi^{(k)}(1)}{k!}(t-1)^k+\varphi_n(t),
$$
where $\varphi_n(t)$ is the Taylor remainder. Applying (\ref{eq:intG1-x}), we obtain the right hand side of (\ref{eq:actionG}), but with the second term replaced by
$$
{\Gamma(\b)\over\Gamma(\a)}\int_0^1G_0(t)\varphi_n(t)dt.
$$
Integrating by parts $n$ times and using (\ref{eq:hatGnG0}) and $\varphi_n^{(n)}(t)=\varphi^{(n)}(t)$, $\varphi^{(k)}_n(1)=\widehat{G}_{k+1}(0)=0$ for $k=0,1,\ldots,n-1$,
we obtain (\ref{eq:actionG}).  Therefore, for any $n\in\N$ the integral (\ref{eq:Gphiintegral}) equals the right hand side of (\ref{eq:actionG}) when $a>0$ and $\Re(\psi)>0$.
Moreover, the right hand side of (\ref{eq:actionG}) is an analytic function of the parameters $\a$ and meromorphic function of the parameters $\b$ with simple poles at $-b_i\in\N_0$;
hence, the right hand side of (\ref{eq:actionG}) gives an expression for the analytic continuation of (\ref{eq:Gphiintegral}) in $\a$ to the domain $\Re(\a)>-n$ and its
meromorphic continuation in $\b$ to the domain $\Re(\psi)>-n$. We conclude that the family of distributions $\G_1=\G_1(\a,\b)$ is analytic in the parameters $\a$
and meromorphic in $\b$ with simple poles at $-b_i\in\N_0$ in the above domain.

\textbf{Remark.}  The regularization defined in (\ref{eq:actionG}) can be easily seen to equal  the Hadamard finite part of the divergent integral (\ref{eq:Gphiintegral}).
See \cite{CostinFriedman,EstradaKanwal} for details.  However, we observe a new phenomenon here. In general, the Hadamard finite part constructed to overcome
divergence at unity does not alter the situation at other points, while formula (\ref{eq:actionG}) regularizes the integral (\ref{eq:Gphiintegral})
\emph{at both points, $0$ and $1$, simultaneously}.

\begin{theorem}\label{th:G1continous}
$\G_1$  is a continuous linear functional on $\CB^\infty[0,1]$ and its definition is independent of $n$.
\end{theorem}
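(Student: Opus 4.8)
The plan is to verify linearity and continuity directly from formula~(\ref{eq:actionG}), and then to establish independence of $n$ by showing that raising $n$ by one does not change the value of the right-hand side of~(\ref{eq:actionG}); only the last step has any content. Linearity is immediate, since in~(\ref{eq:actionG}) the maps $\varphi\mapsto\varphi^{(k)}(1)$ and $\varphi\mapsto\int_0^1\widehat{G}_n(t)\varphi^{(n)}(t)\,dt$ are linear and the ${}_{p+1}F_p$-factors are fixed constants.

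For continuity, let $\varphi_j\to\varphi$ in $\CB^\infty[0,1]$. Uniform convergence of every derivative on $[0,1]$ forces $\varphi_j^{(k)}(1)\to\varphi^{(k)}(1)$, so the finite sum in~(\ref{eq:actionG}) for $\varphi_j$ converges to that for $\varphi$; and
\[
\left|\int_0^1\widehat{G}_n(t)\bigl(\varphi_j^{(n)}(t)-\varphi^{(n)}(t)\bigr)\,dt\right|\le\Bigl(\max_{t\in[0,1]}\bigl|\varphi_j^{(n)}(t)-\varphi^{(n)}(t)\bigr|\Bigr)\int_0^1\bigl|\widehat{G}_n(t)\bigr|\,dt ,
\]
so it suffices to know that $\widehat{G}_n\in L^1(0,1)$. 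This is read off the asymptotics already proved: near $t=0$, Proposition~\ref{lm:Gn-asymp} shows $\widehat{G}_n(t)/\Gamma(\a)$ behaves like a constant times $t^{a'+n-1}$ up to a power of $\log(1/t)$ (or it obeys the estimate~(\ref{eq:Gnasymp0aint}) when the normalized $\a$ meets $-\N_0$), and in both cases the exponent of $t$ exceeds $-1$ because $n>-\min(a,\Re(\psi))\ge-a$ while $a'\ge a$; near $t=1$, Proposition~\ref{lm:Gn-asymp1} shows $\widehat{G}_n$ is bounded unless $\Re(\psi)+n-1<0$, in which case $\widehat{G}_n(t)=\O((1-t)^{\Re(\psi)+n-1})$ by~(\ref{eq:hatG1third}), again with exponent $>-1$ since $n>-\Re(\psi)$. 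If $\a$ contains non-positive integers one works throughout with the entire function $\widehat{G}_n/\Gamma(\a)$ supplied by Proposition~\ref{lm:Gn-extendeddef}.

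For independence of $n$, write $R_n(\varphi)$ for the right-hand side of~(\ref{eq:actionG}); since any admissible index stays admissible after adding $1$, it suffices to prove $R_{n+1}=R_n$. Because $n>-a$ we have $n+\Re(\a)>0$ componentwise, so Proposition~\ref{lm:Gnfracintegral} with $m=n$ gives $\widehat{G}_{n+1}(x)=\int_0^x\widehat{G}_n(t)\,dt$ after dividing by $\Gamma(\a)$; in particular $\widehat{G}_{n+1}$ is absolutely continuous on $[0,1]$ with $\widehat{G}_{n+1}(0)=0$ and $\widehat{G}_{n+1}'=\widehat{G}_n$ a.e. Integrating by parts,
\[
\int_0^1\widehat{G}_{n+1}(t)\varphi^{(n+1)}(t)\,dt=\widehat{G}_{n+1}(1)\varphi^{(n)}(1)-\int_0^1\widehat{G}_n(t)\varphi^{(n)}(t)\,dt ,
\]
and substituting this into $R_{n+1}(\varphi)$ and cancelling the $\int_0^1\widehat{G}_n\varphi^{(n)}$ contributions against $R_n(\varphi)$ leaves
\[
R_{n+1}(\varphi)-R_n(\varphi)=\varphi^{(n)}(1)\left[\frac{(-1)^n}{n!}\,{}_{p+1}F_p\!\left(\begin{matrix}\a,-n\\\b\end{matrix}\,\Big|\,1\right)+\frac{(-1)^{n+1}\Gamma(\b)}{\Gamma(\a)}\,\widehat{G}_{n+1}(1)\right].
\]
The bracket vanishes: $\widehat{G}_{n+1}(1)=\int_0^1\widehat{G}_n(t)\,dt=\hat{m}_0$ is the $k=0$ moment computed in~(\ref{eq:bargk}), which equals $\Gamma(\a)\,{}_{p+1}F_p(-n,\a;\b;1)/(n!\,\Gamma(\b))$, and ${}_{p+1}F_p$ is symmetric in its upper parameters. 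Hence $R_{n+1}=R_n$, and by induction $\langle\G_1,\varphi\rangle=R_n(\varphi)$ is the same for every admissible $n$.

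The only delicate point is the $n$-independence step, and there the difficulty is purely bookkeeping: one must check that the single standing hypothesis $n>-\min(a,\Re(\psi))$ is exactly what makes Proposition~\ref{lm:Gnfracintegral} applicable with $m=n$ (so that $\widehat{G}_{n+1}'=\widehat{G}_n$ and $\widehat{G}_{n+1}(0)=0$), makes the moment formula~(\ref{eq:bargk}) valid, and guarantees the integrability used in the continuity argument; and one must remember that when some component of $\a$ is a non-positive integer all identities are to be read after division by $\Gamma(\a)$, which is legitimate by Proposition~\ref{lm:Gn-extendeddef}.
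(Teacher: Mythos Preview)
Your proof is correct and follows essentially the same route as the paper's: linearity is trivial, continuity follows from $\widehat{G}_n\in L^1(0,1)$ via Propositions~\ref{lm:Gn-asymp1} and~\ref{lm:Gn-asymp}, and independence of $n$ is obtained by a single integration by parts using $\widehat{G}_{n+1}'=\widehat{G}_n$, $\widehat{G}_{n+1}(0)=0$, and the value $\widehat{G}_{n+1}(1)$ supplied by the moment formula~(\ref{eq:bargk}). The paper phrases the last step as comparing two admissible indices $m<n$ and iterating the integration by parts $n-m$ times, whereas you compare $n$ with $n+1$ and induct; these are the same argument, and your version is arguably cleaner in its explicit verification that the standing hypothesis $n>-\min(a,\Re(\psi))$ is exactly what is needed to invoke Propositions~\ref{lm:Gnfracintegral} and~\ref{lm:Gnmoments}.
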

\begin{proof} Linearity is obvious. For continuity, assume that $\varphi_j\to\varphi$ in $\CB^{\infty}[0,1]$ and define
$$
f_k:=\frac{1}{k!}{}_{p+1}F_p\left.\left(\begin{matrix}\a,-k \\\b\end{matrix}\right\vert 1\right).
$$
Then
\begin{multline*}
|\langle{\G_1,\varphi_j}\rangle-\langle{\G_1,\varphi}\rangle|=|\langle{\G_1,\varphi_j-\varphi}\rangle|\leq\sum\limits_{k=0}^{n-1}|f_k||\varphi^{(k)}_j(1)-\varphi^{(k)}(1)|
\\
+\max\limits_{x\in[0,1]}|\varphi^{(k)}_j(x)-\varphi^{(k)}(x)|\left\vert\frac{\Gamma(\b)}{\Gamma(\a)}\right\vert\int_0^1\vert\widehat{G}_n(t)\vert{dt}\to0~\text{as}~j\to \infty
\end{multline*}
by the definition of convergence in $\CB^{\infty}[0,1]$ and because the last integral in finite by Propositions \ref{lm:Gn-asymp1} and  \ref{lm:Gn-asymp}.
Finally, write $\G_{1,n}$ for the distribution $\G_1$ with $n$ terms in the sum (\ref{eq:actionG}) and $\G_{1,m}$ for $m\ne{n}$ terms. By definition we must choose $n,m>-\Re(\psi)$.
Assume without loss of generality that $n>m$ and let $\varphi$ be an arbitrary test function.  Integration by parts yields
\begin{multline*}
\langle{\G_{1,n},\varphi}\rangle-\langle{\G_{1,m},\varphi}\rangle=\sum\limits_{k=m}^{n-1}(-1)^kf_k\varphi^{(k)}(1)
+{(-1)^n\Gamma(\b)\over\Gamma(\a)}\int_0^1\widehat{G}_n(t)\varphi^{(n)}(t)dt
\\
-{(-1)^m\Gamma(\b)\over\Gamma(\a)}\int_0^1\widehat{G}_m(t)\varphi^{(m)}(t)dt=\sum\limits_{k=m}^{n-1}(-1)^kf_k\varphi^{(k)}(1)
\\
+{\Gamma(\b)\over\Gamma(\a)}\left\{(-1)^n\int_0^1\widehat{G}_n(t)\varphi^{(n)}(t)dt-(-1)^m\left.\widehat{G}_{m+1}(t)\varphi^{(m)}(t)\right\vert_0^1
+(-1)^m\int_0^1\widehat{G}_{m+1}(t)\varphi^{(m+1)}(t)dt\right\}
\\
-{(-1)^m\Gamma(\b)\over\Gamma(\a)}\int_0^1\widehat{G}_m(t)\varphi^{(m)}(t)dt=\sum\limits_{k=m+1}^{n-1}(-1)^kf_k\varphi^{(k)}(1)
\\
+{(-1)^n\Gamma(\b)\over\Gamma(\a)}\int_0^1\widehat{G}_n(t)\varphi^{(n)}(t)dt-{(-1)^{m+1}\Gamma(\b)\over\Gamma(\a)}\int_0^1\widehat{G}_{m+1}(t)\varphi^{(m+1)}(t)dt,
\end{multline*}
where we used $\widehat{G}_{m+1}(0)=0$, $\widehat{G}_{m+1}(1)=\Gamma(\a)f_m/\Gamma(\b)$ (by Proposition~\ref{lm:Gnmoments}).
Repeating integration by parts $(n-m)$ times and using the above calculation clearly leads to $\langle{\G_{1,n},\varphi}\rangle-\langle{\G_{1,m},\varphi}\rangle=0$.
\end{proof}

\begin{theorem}\label{th:G-hypergeom}
For arbitrary $\a\in\C^{p}$, $\b\in\C^{p}$, $-\b\notin\N_0$, and $\c\in\C^u$, $\d\in\C^s$,$-\d\notin\N_0$, choose a nonnegative
integer $n>-\min(a,\Re(\psi))$,
where $a$ and $\psi$ are defined in \textup{(\ref{eq:a-psi-defined})}. Then
\begin{equation}\label{eq:uFsaction}
\langle\G_1(\a,\b),{}_uF_{s}\!\left(\c;\d;-\!zt\right)\rangle
={}_{u+p}F_{s+p}\!\left.\left(\begin{matrix}\a,\c\\\b,\d\end{matrix}\right\vert -\!z\!\right)
\end{equation}
for all $z\in\C$ if $u\le{s}$ and for all $z\in\C\!\setminus\!(-\infty,-1]$ if $u=s+1$.
\end{theorem}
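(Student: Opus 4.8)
The plan is to evaluate $\langle\G_1(\a,\b),\varphi\rangle$ with $\varphi(t)={}_uF_{s}(\c;\d;-zt)$ straight from Definition~1 and then recognise the resulting series through Corollary~\ref{cr:summation}. First I would check that $\varphi\in\CB^{\infty}[0,1]$ for the admissible $z$, so that the pairing is legitimate: when $u\le s$ the function ${}_uF_{s}$ is entire in its argument; when $u=s+1$ the segment $\{-zt:t\in[0,1]\}$ meets the branch cut $[1,\infty)$ of ${}_{s+1}F_{s}$ exactly when $z\in(-\infty,-1]$, hence for $z\notin(-\infty,-1]$ the map $t\mapsto{}_uF_{s}(\c;\d;-zt)$ is analytic on a neighbourhood of $[0,1]$ with all derivatives bounded there. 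Next I would use the elementary rule $\varphi^{(k)}(t)=(-z)^{k}\frac{(\c)_k}{(\d)_k}{}_uF_{s}(\c+k;\d+k;-zt)$, so that $(-1)^k\varphi^{(k)}(1)/k!=\frac{z^k(\c)_k}{k!(\d)_k}{}_uF_{s}(\c+k;\d+k;-z)$ and $\varphi^{(n)}(t)=(-z)^n\frac{(\c)_n}{(\d)_n}{}_uF_{s}(\c+n;\d+n;-zt)$.

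Writing $S_m:=\frac{z^m(\c)_m}{m!(\d)_m}\,{}_uF_{s}(\c+m;\d+m;-z)\,{}_{p+1}F_p(-m,\a;\b;1)$, the finite sum in (\ref{eq:actionG}) is then exactly $\sum_{k=0}^{n-1}S_k$. For the integral term I would apply Proposition~\ref{lm:Gnhypergeomint} with $\c+n$ and $\d+n$ in place of $\c$ and $\d$ (its hypotheses $\Re(\a)>-n$ and $\Re(\psi)>-n$ being precisely the assumption on $n$); since $(-1)^n(-z)^n=z^n$, the factor $(\c)_n/(\d)_n$ combines with $(\c+n)_j/(\d+n)_j$ via $(\c)_n(\c+n)_j=(\c)_{n+j}$, $(\d)_n(\d+n)_j=(\d)_{n+j}$, and the substitution $m=n+j$ turns the integral term into $\sum_{m\ge n}S_m$. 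Adding the two contributions gives $\langle\G_1(\a,\b),\varphi\rangle=\sum_{m=0}^{\infty}S_m$, and Corollary~\ref{cr:summation} — whose summand in (\ref{eq:FFsummation}) is precisely $S_j$ — identifies this with ${}_{u+p}F_{s+p}(\a,\c;\b,\d;-z)$. This already proves (\ref{eq:uFsaction}) for all $z\in\C$ when $u\le s$ and for $\Re(z)>-1/2$ when $u=s+1$. (No special treatment is needed when $\a$ has non-positive integer components: $\widehat{G}_n/\Gamma(\a)$ is entire in $\a$ by Proposition~\ref{lm:Gn-extendeddef}, and both Proposition~\ref{lm:Gnhypergeomint} and Corollary~\ref{cr:summation} are valid in that generality after division by $\Gamma(\a)$.)

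The only remaining point, and the one I regard as genuinely delicate, is the extension to the whole cut plane $\C\setminus(-\infty,-1]$ when $u=s+1$. Here I would argue that both sides of (\ref{eq:uFsaction}) are analytic in $z$ on that connected domain: the right-hand side is a ${}_{(s+p)+1}F_{s+p}$ evaluated at $-z$, analytic off the cut; the left-hand side, as computed from (\ref{eq:actionG}), is a finite linear combination of the functions ${}_uF_{s}(\c+k;\d+k;-z)$ — analytic off the cut — plus $\frac{(-1)^n\Gamma(\b)}{\Gamma(\a)}\int_0^1\widehat{G}_n(t)\varphi^{(n)}(t)\,dt$, which is holomorphic in $z$ by differentiation under the integral sign, its integrand being holomorphic in $z$ locally uniformly in $t\in[0,1]$ while $\widehat{G}_n$ is integrable on $[0,1]$ by Propositions~\ref{lm:Gn-asymp1} and \ref{lm:Gn-asymp}. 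Since the two analytic functions coincide on the nonempty open half-plane $\{\Re(z)>-1/2\}$, the identity theorem extends (\ref{eq:uFsaction}) to all of $\C\setminus(-\infty,-1]$, completing the proof.
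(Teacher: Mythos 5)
Your argument is correct, but it is not the route the paper actually takes: the paper's proof observes that for $\Re(\a)>0$ and $\Re(\psi)>0$ the pairing $\langle\G_1,\varphi\rangle$ is just the integral (\ref{eq:Gphiintegral}), so that (\ref{eq:uFsaction}) reduces to the known representation \cite[(4)]{KarpJMS}, and then extends to general parameters by analytic continuation in $\a$ and $\b$ (both sides being analytic in $\a$ and meromorphic in $\b$); the direct computation you carry out is only mentioned there in one closing sentence as an ``alternative direct proof''. Your write-up is essentially that alternative made precise: you expand the finite sum of Definition~1 using $\varphi^{(k)}(t)=(-z)^k\frac{(\c)_k}{(\d)_k}\,{}_uF_s(\c+k;\d+k;-zt)$, apply Proposition~\ref{lm:Gnhypergeomint} with shifted upper/lower parameters to the integral term so that it produces exactly the tail $\sum_{m\ge n}S_m$, and then invoke (\ref{eq:FFsummation}); the parameter shifts $(\c)_n(\c+n)_j=(\c)_{n+j}$ and the sign bookkeeping $(-1)^n(-z)^n=z^n$ are handled correctly, and your parenthetical appeal to Proposition~\ref{lm:Gn-extendeddef} covers non-positive integer components of $\a$. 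Two things your version buys that the paper leaves implicit: you verify that $t\mapsto{}_uF_s(\c;\d;-zt)$ lies in $\CB^{\infty}[0,1]$ precisely when $z\notin(-\infty,-1]$ in the case $u=s+1$ (so the pairing is even defined), and you pass from $\Re(z)>-1/2$ to the full cut plane by the identity theorem in the single variable $z$, using holomorphy of the integral term under the integral sign --- a cleaner continuation than the paper's continuation in the many parameters, though the paper's route is shorter because it leans on the external representation \cite[(4)]{KarpJMS} and on the already established fact that $\G_1$ realizes the analytic continuation of (\ref{eq:Gphiintegral}) in $\a$ and $\b$.
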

\begin{proof} Indeed, for $\Re(\psi)>0$ and $\Re(\a)>0$ the action of $\G_1$ reduces to the integral (\ref{eq:Gphiintegral}), so that formula (\ref{eq:uFsaction})
coincides with \cite[(4)]{KarpJMS}. For general $\a$ and $\psi$ the claim then follows by analytic continuation in
parameters, as both sides of (\ref{eq:uFsaction}) are analytic in $\a$ and meromorphic in $\b$.  Alternative direct proof can be furnished by using the
definition of $\G_1$ and applying formula (\ref{eq:Gnhypergeomint}) to the integral term and (\ref{eq:FFsummation}) to the resulting sum.
\end{proof}

The most important particular cases of Theorem~\ref{th:G-hypergeom} are given in the next corollary.
\begin{corollary}\label{cr:G1-representations}
Under conditions of Theorem~\textup{\ref{th:G-hypergeom}} we have\textup{:}
\begin{equation}\label{eq:G-p+1Fp}
\langle\G_1(\a,\b),(1+zt)^{-\sigma}\rangle={}_{p+1}F_p\left.\left(\begin{matrix}\sigma,\a\\\b\end{matrix}\right\vert -z\right)
\end{equation}
for all $z\in\C\!\setminus\!(-\infty,-1]$, $\sigma\in\C$,
\begin{equation}\label{eq:G-pFp}
\langle\G_1(\a,\b),e^{-zt}\rangle={}_{p}F_p\left.\left(\begin{matrix}\a\\\b\end{matrix}\right\vert -z\right)
\end{equation}
and
\begin{equation}\label{eq:G-p-1Fp}
\langle\G_1(\a,\b),\cos(2\sqrt{zt})\rangle={}_{p-1}F_p\left.\left(\begin{matrix}\hat{\a}\\\b\end{matrix}\right\vert -z\right)
\end{equation}
for all  $z\in\C$, where $\a=(\hat{\a},1/2)$ in the last formula.
\end{corollary}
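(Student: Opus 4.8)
The plan is to recognize each of the three kernels on the left-hand sides as a particular instance of the generalized hypergeometric function ${}_uF_{s}(\c;\d;-zt)$ and then to invoke Theorem~\ref{th:G-hypergeom} directly, simply reading off the parameters. For (\ref{eq:G-p+1Fp}) I would use the binomial series $(1+zt)^{-\sigma}={}_1F_0(\sigma;-;-zt)$, so that $u=1$, $s=0$, $\c=(\sigma)$ and $\d$ is empty; since $u=s+1$, Theorem~\ref{th:G-hypergeom} returns ${}_{p+1}F_p(\a,\sigma;\b;-z)$ valid on $\C\setminus(-\infty,-1]$, which is (\ref{eq:G-p+1Fp}) after reordering the numerator parameters. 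For (\ref{eq:G-pFp}) I would write $e^{-zt}={}_0F_0(-;-;-zt)$, so $u=s=0$ and Theorem~\ref{th:G-hypergeom} yields ${}_pF_p(\a;\b;-z)$ for all $z\in\C$.

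For the cosine kernel the only genuine computation is the elementary identity $\cos(2\sqrt{x})={}_0F_1(-;1/2;-x)$, which follows from $(2k)!=4^k k!\,(1/2)_k$. Substituting $x=zt$ gives $u=0$, $s=1$, $\c$ empty, $\d=(1/2)$, and Theorem~\ref{th:G-hypergeom} produces ${}_pF_{p+1}(\a;\b,1/2;-z)$, convergent for all $z\in\C$ because $u\le s$. Since by hypothesis $\a=(\hat{\a},1/2)$, the numerator parameter $1/2$ cancels the denominator parameter $1/2$, reducing ${}_pF_{p+1}(\a;\b,1/2;-z)$ to ${}_{p-1}F_p(\hat{\a};\b;-z)$, which is (\ref{eq:G-p-1Fp}).

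There is essentially no obstacle here beyond bookkeeping: one must only check that the hypotheses of Theorem~\ref{th:G-hypergeom} are met in each case (in particular $-\d=-1/2\notin\N_0$ for the cosine kernel) and that the convergence regions stated in the corollary coincide with those delivered by the theorem — the cut plane $\C\setminus(-\infty,-1]$ when $u=s+1$ for the generalized Stieltjes kernel, and all of $\C$ when $u\le s$ for the exponential and cosine kernels. The one point worth flagging is the cancellation step in the last formula, which is precisely why the hypothesis $\a=(\hat{\a},1/2)$ is imposed: without a $1/2$ among the top parameters the cosine kernel would not collapse to a Bessel-type ${}_{p-1}F_p$.
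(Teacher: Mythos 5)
Your proposal is correct and follows essentially the same route as the paper: all three formulas are read off as particular cases of Theorem~\ref{th:G-hypergeom}, using $(1+zt)^{-\sigma}={}_1F_0(\sigma;-;-zt)$, $e^{-zt}={}_0F_0(-;-;-zt)$ and $\cos(2\sqrt{zt})={}_0F_1(-;1/2;-zt)$, with the assumption $\a=(\hat{\a},1/2)$ producing the cancellation of the parameter $1/2$ in the last case exactly as in the paper's proof.
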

\begin{proof} Indeed, (\ref{eq:G-p+1Fp}) and (\ref{eq:G-pFp}) are obviously particular cases of (\ref{eq:uFsaction}).  The last formula (\ref{eq:G-p-1Fp})
is also a particular case of (\ref{eq:uFsaction}) in view of $\a=(\hat{\a},1/2)$ and $\cos\left(2\sqrt{zt}\right)={}_0F_1(-;1/2;-zt)$.
\end{proof}

The combination of Corollary~\ref{cr:G1-representations} with Proposition~\ref{pr:signG} leads to the decomposition
formulas presented in the next corollary.

\begin{corollary}\label{cr:positivemeasure-repr}
Suppose that $\a$, $\b$ are arbitrary real vectors of size $p$ such that $-\b\notin\N_0$ and $\sigma$ is any real number.  Then there exists
$N\in\N_0$, such that for all $n\ge{N}$,
$$
\frac{1}{\Gamma(\b)}{}_{p+1}F_p\left.\left(\begin{matrix}\sigma,\a\\\b\end{matrix}\right\vert -z\right)
=\frac{1}{\Gamma(\b)}\sum_{k=0}^{n-1}{z^k(\sigma)_k\over(z+1)^{\sigma+k}k!}{}_{p+1}F_p\left(\left.\begin{matrix}\a,-k\\\b\end{matrix}\right\vert1\right)
+(-1)^{\eta}(\sigma)_nz^n\int_0^1\frac{\mu_n(dt)}{(1+zt)^{\sigma+n}},
$$
$$
\frac{1}{\Gamma(\b)}{}_{p}F_p\left.\left(\begin{matrix}\a\\\b\end{matrix}\right\vert -z\right)
=\frac{e^{-z}}{\Gamma(\b)}\sum_{k=0}^{n-1}{z^k\over k!}
{}_{p+1}F_p\left.\left(\begin{matrix}\a,-k \\\b\end{matrix}\right\vert 1\right)
+(-1)^{\eta}z^{n}\int_0^1e^{-zt}\mu_n(dt)
$$
and, with $\a=(\hat{\a},1/2)$,
\begin{multline*}
\frac{1}{\Gamma(\b)}{}_{p-1}F_p\left.\left(\begin{matrix}\hat{\a}\\\b\end{matrix}\right\vert -z\right)
=\frac{1}{\Gamma(\b)}\sum\limits_{k=0}^{n-1}\frac{z^k{}_0F_1(-;k+1/2;-z)}{(1/2)_kk!}
{}_{p+1}F_p\left.\left(\begin{matrix}-k,\a\\\b\end{matrix}\right\vert1\right)
\\
+(-1)^{\eta}\frac{z^n}{(1/2)_n}\int_0^1{}_0F_1(-;n+1/2;-zt)\mu_n(dt),
\end{multline*}
where
$$
\mu_n(dt):={(-1)^{\eta}\over\Gamma(\a)}G^{p,1}_{p+1,p+1}\left(t\left|\begin{array}{l}\!\!n,\b+n-1\!\!\\\!\!\a+n-1,0\!\!\end{array}\right.\right)dt
$$
is a positive measure. The number $\eta$ is defined in \textup{(\ref{eq:et})}.
\end{corollary}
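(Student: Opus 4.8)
The plan is to combine the three integral representations of $\widehat{G}_n$ from Corollary~\ref{cr:transforms} with the sign-stabilization result of Proposition~\ref{pr:signG}, using the formulas of Corollary~\ref{cr:G1-representations} to identify the left-hand sides. First I would observe that Proposition~\ref{pr:signG} supplies an $N\in\N_0$ such that for every $n\ge N$ the function $(-1)^{\eta}\widehat{G}_n(t)/\Gamma(\a)$ is positive on $(0,1]$; consequently $\mu_n(dt)$ as defined in the statement is genuinely a positive (finite, by Propositions~\ref{lm:Gn-asymp1} and~\ref{lm:Gn-asymp}) measure on $[0,1]$. This establishes the only structural assertion in the corollary; everything else is an algebraic rearrangement of identities already proved.

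Next, for the first displayed formula I would take $\sigma\in\R$ and rewrite the generalized Stieltjes transform identity~\eqref{eq:Gnpowerint}. Multiplying that identity by $\Gamma(\a)(\sigma)_n z^n$ and regrouping the series index $j\mapsto k$ gives exactly $(\sigma)_n z^n\int_0^1(1+zt)^{-\sigma-n}\mu_n(dt)$ on the left (after inserting the factor $(-1)^{\eta}$ on both sides), while the right-hand side reassembles into the tail of the hypergeometric series appearing in~\eqref{eq:G-p+1Fp}. To see that the finite sum plus this tail is the whole function $(1/\Gamma(\b)){}_{p+1}F_p(\sigma,\a;\b;-z)$, I would invoke Theorem~\ref{th:G-hypergeom} (equivalently Corollary~\ref{cr:G1-representations}, eq.~\eqref{eq:G-p+1Fp}) together with Definition~1: the action $\langle\G_1(\a,\b),(1+zt)^{-\sigma}\rangle$ is by~\eqref{eq:actionG} precisely the finite sum $\sum_{k=0}^{n-1}\frac{(-1)^k}{k!}\varphi^{(k)}(1){}_{p+1}F_p(\a,-k;\b;1)$ plus $((-1)^n\Gamma(\b)/\Gamma(\a))\int_0^1\widehat{G}_n(t)\varphi^{(n)}(t)dt$ with $\varphi(t)=(1+zt)^{-\sigma}$. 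Computing $\varphi^{(k)}(1)=(-1)^k(\sigma)_k z^k(1+z)^{-\sigma-k}$ and $\varphi^{(n)}(t)=(-1)^n(\sigma)_n z^n(1+zt)^{-\sigma-n}$ turns the definition verbatim into the claimed decomposition, with the integral term becoming $(-1)^{\eta}(\sigma)_n z^n\int_0^1(1+zt)^{-\sigma-n}\mu_n(dt)$. The same mechanism with $\varphi(t)=e^{-zt}$ (so $\varphi^{(k)}(1)=(-1)^k z^k e^{-z}$, $\varphi^{(n)}(t)=(-1)^n z^n e^{-zt}$) yields the second formula via~\eqref{eq:G-pFp} and~\eqref{eq:Gnexp}, and with $\varphi(t)={}_0F_1(-;1/2;-zt)=\cos(2\sqrt{zt})$, $\a=(\hat\a,1/2)$, it yields the third via~\eqref{eq:G-p-1Fp} and~\eqref{eq:GnBessel}; here one uses $\tfrac{d}{dt}{}_0F_1(-;\nu;-zt)=-\tfrac{z}{\nu}{}_0F_1(-;\nu+1;-zt)$ iterated $k$ times, evaluated at $t=1$, to produce the coefficients $z^k{}_0F_1(-;k+1/2;-z)/((1/2)_k k!)$.

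The reality hypotheses on $\a$, $\b$, $\sigma$ are used only to make $\eta$ and the positivity of $\mu_n$ meaningful; the underlying identities~\eqref{eq:actionG}, \eqref{eq:Gnpowerint}, \eqref{eq:Gnexp}, \eqref{eq:GnBessel} hold for complex parameters under the stated convergence restrictions, so no separate analytic-continuation step is needed beyond what Proposition~\ref{lm:Gnhypergeomint} already provides. I expect the only real point requiring care to be the bookkeeping that shows the series tail coming out of~\eqref{eq:Gnpowerint}, \eqref{eq:Gnexp}, \eqref{eq:GnBessel} after multiplication by the appropriate prefactor is \emph{identical} to the $k\ge n$ portion implicit in Corollary~\ref{cr:G1-representations} — i.e. that the split into ``first $n$ terms of the closed-form series'' plus ``integral against $\mu_n$'' is consistent across all three kernels. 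This is purely a matter of matching the $\varphi^{(k)}(1)$ values against the coefficients in~\eqref{eq:G-p+1Fp}--\eqref{eq:G-p-1Fp}, with no genuine analytic obstacle; once the three substitutions are carried out, the corollary follows immediately.
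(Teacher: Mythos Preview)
Your proposal is correct and follows exactly the route the paper intends: expand $\langle\G_1,\varphi\rangle$ via Definition~1 for each of the three kernels in Corollary~\ref{cr:G1-representations}, compute the derivatives $\varphi^{(k)}(1)$ and $\varphi^{(n)}(t)$ explicitly, and invoke Proposition~\ref{pr:signG} to guarantee that $\mu_n$ is a positive measure for $n\ge N$. The references to Corollary~\ref{cr:transforms} (equations~\eqref{eq:Gnpowerint}, \eqref{eq:Gnexp}, \eqref{eq:GnBessel}) and the talk of ``matching tails'' are redundant---once you have written out \eqref{eq:actionG} with the correct $\varphi^{(k)}$, the identity is complete and no separate consistency check against the series in Proposition~\ref{lm:Gnhypergeomint} is required---but this is harmless over-explanation, not a gap.
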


\section{Alternative regularizations tailored for the Bessel type functions}

The finite sum in the decomposition corresponding to $\langle\G_1(\a,\b),\cos(\sqrt{zt})\rangle$ given by the last formula
of Corollary~\ref{cr:positivemeasure-repr} contains a non-elementary Bessel function ${}_0F_1$.  It seems desirable, however,
to derive an alternative representation containing only the elementary cosine function, in particular, when studying zeros of the Bessel type hypergeometric
function.  To this end, start with \cite[(5)]{KLMeijer1}:
\begin{equation*}
{}_{p-1}F_p\left.\left(\begin{matrix}\hat{\a}\\\b\end{matrix}\:\right\vert -z\right)
={\Gamma(\b)\over\sqrt{\pi}\Gamma(\hat{\a})}\int_0^1\cos(2\sqrt{zt})G^{p,0}_{p,p}\left(t\left|\begin{array}{l}\!\!\b-1\!\!\\\!\!\hat{\a}-1,-1/2\!\!\end{array}\right.\right)\!dt.
\end{equation*}
Setting $t=u^2$ and changing $z\to{z^2/4}$ we get
\begin{equation}\label{eq:p-1Fpcos}
{}_{p-1}F_p\left.\left(\begin{matrix}\hat{\a}\\\b\end{matrix}\:\right\vert -z^2/4\right)
={2\Gamma(\b)\over\sqrt{\pi}\Gamma(\hat{\a})}\int_0^1\cos(zu)
G^{p,0}_{p,p}\left(u^2\left|\begin{array}{l}\!\!\b-1/2\!\!\\\!\!\hat{\a}-1/2,0\!\!\end{array}\right.\right)\!du.
\end{equation}
Hence, we need to regularize integrals of the form
\begin{equation}\label{eq:Gsquare-int}
\int_0^1G^{p,0}_{p,p}\left(u^2\left|\begin{array}{l}\!\!\b-1/2\!\!\\\!\!\a-1/2\!\!\end{array}\right.\right)\phi(u)du.
\end{equation}
Assuming $\phi\in\CB^{\infty}[0,1]$, we apply Taylor's theorem with integral remainder to $\phi$ in the neighborhood of  $u=1$ to obtain
\begin{multline*}
\int_0^1G^{p,0}_{p,p}\left(u^2\left|\begin{array}{l}\!\!\b-1/2\!\!\\\!\!\a-1/2\!\!\end{array}\right.\right)\phi(u)du
\\
=\int_0^1G^{p,0}_{p,p}\left(u^2\left|\begin{array}{l}\!\!\b-1/2\!\!\\\!\!\a-1/2\!\!\end{array}\right.\right)\left[\sum\limits_{k=0}^{n-1}\frac{\phi^{(k)}(1)}{k!}(u-1)^k
+\frac{1}{(n-1)!}\int\limits_{1}^{u}(u-t)^{n-1}\phi^{(n)}(t)dt\right]\!du
\\
=\!\sum\limits_{k=0}^{n-1}\frac{\phi^{(k)}(1)}{(-1)^kk!}\!\int\limits_0^1\!\!G^{p,0}_{p,p}\!\left(\!u^2\!\left|\begin{array}{l}\!\!\b-1/2\!\!\\\!\!\a-1/2\!\!\end{array}\right.\right)(1-u)^k\!du
+\frac{(-1)^n}{\Gamma(n)}\!\int\limits_0^1\!\!G^{p,0}_{p,p}\!\left(\!u^2\!\left|\begin{array}{l}\!\!\b-1/2\!\!\\\!\!\a-1/2\!\!\end{array}\right.\right)\!du
\!\!\int\limits_{u}^{1}\!(t-u)^{n-1}\phi^{(n)}(t)dt.
\end{multline*}
By substitution $t=u^2$ and separation of odd and even terms, the leftmost integral in the last expression is elaborated as follows:
\begin{multline*}
\int_0^1G^{p,0}_{p,p}\left(u^2\left|\begin{array}{l}\!\!\b-1/2\!\!\\\!\!\a-1/2\!\!\end{array}\right.\right)(1-u)^k\!du=\sum\limits_{j=0}^{k}(-1)^j\binom{k}{j}\int_0^1G^{p,0}_{p,p}\left(u^2\left|\begin{array}{l}\!\!\b-1/2\!\!\\\!\!\a-1/2\!\!\end{array}\right.\right)u^j\!du
\\
=\frac{1}{2}\sum\limits_{j=0}^{k}(-1)^j\binom{k}{j}\int_0^1G^{p,0}_{p,p}\left(t\left|\begin{array}{l}\!\!\b-1/2\!\!\\\!\!\a-1/2\!\!\end{array}\right.\right)t^{j/2-1/2}\!dt=\sum\limits_{j=0}^{k}
\frac{(-k)_j\Gamma(\a+j/2)}{2\Gamma(\b+j/2)j!}
\\
=\sum\limits_{\stackrel{j=0}{j=2m}}^{k}
\frac{(-k)_j\Gamma(\a+j/2)}{2\Gamma(\b+j/2)j!}
+
\sum\limits_{\stackrel{j=0}{j=2m+1}}^{k}
\frac{(-k)_j\Gamma(\a+j/2)}{2\Gamma(\b+j/2)j!}
\\
=\!\!\sum\limits_{m=0}^{k/2}\!\frac{(-k/2)_m(-k/2+1/2)_m\Gamma(\a+m)}{2(1/2)_m\Gamma(\b+m)m!}
+\!\!\!\!
\sum\limits_{m=0}^{(k-1)/2}\!\frac{(-k)(-k/2+1)_m(-k/2+1/2)_m\Gamma(\a+1/2+m)}{2\Gamma(\b+1/2+m)(3/2)_mm!}
\\
=\!\frac{\Gamma(\a)}{2\Gamma(\b)}{}_{p+2}F_{p+1}\!\left(\begin{matrix}-k/2,-k/2+1/2,\a\\1/2,\b\end{matrix}\right)
-\frac{k\Gamma(\a+1/2)}{2\Gamma(\b+1/2)}
{}_{p+2}F_{p+1}\!\left(\begin{matrix}-k/2+1,-k/2+1/2,\a+1/2\\ 3/2,\b+1/2\end{matrix}\right)\!,
\end{multline*}
where we utilized the shorthand notation ${}_{p}F_{q}(\a;\b)={}_{p}F_{q}(\a;\b;1)$ and the easily verifiable identities
$$
(2m)!=4^m(1/2)_mm!,~~~(2m+1)!=4^m(3/2)_mm!,
$$
$$
(-k)_{2m}=4^m(-k/2)_m(-k/2+1/2)_m,~~~(-k)_{2m+1}=(-k)4^m(-k/2+1)_m(-k/2+1/2)_m.
$$
Further, using \cite[formula~2.24.2.2]{PBM3} for the second term, we get
\begin{multline*}
\int\limits_0^1\!\!G^{p,0}_{p,p}\left(u^2\left|\begin{array}{l}\!\!\b-1/2\!\!\\\!\!\a-1/2\!\!\end{array}\right.\right)\!du
\!\int\limits_{u}^{1}(t-u)^{n-1}\phi^{(n)}(t)dt
=\!\!\int\limits_0^1\!\!\phi^{(n)}(t)dt\int\limits_{0}^{t}\!G^{p,0}_{p,p}\left(u^2\left|\begin{array}{l}\!\!\b-1/2\!\!\\\!\!\a-1/2\!\!\end{array}\right.\right)(t-u)^{n-1}\!du
\\
=\frac{(n-1)!}{2^n}\int_0^1
G^{p,2}_{p+2,p+2}\left(t^2\left|\begin{array}{l}\!\!n/2,(n+1)/2,\b+(n-1)/2\!\!\\\!\!\a+(n-1)/2,0,1/2\!\!\end{array}\right.\right)\phi^{(n)}(t)dt.
\end{multline*}
Combining these formulas we can define the regularization of the integral (\ref{eq:Gsquare-int}) as follows.

\medskip

\noindent\textbf{Definition~2.} \emph{For arbitrary complex $\a$ and $\b$, $-\b\notin\N_0$,  choose a nonnegative integer $n>-\min(a,\Re(\psi))$,
where $a$ and $\psi$ are given in \textup{(\ref{eq:a-psi-defined})}. Define a regularization of the integral
\textup{(\ref{eq:Gsquare-int})} as the distribution $\G_b^{1}=\G_b^{1}(\a,\b)$
acting on a test function $\phi\in\CB^{\infty}[0,1]$ according to the formula
\begin{multline}\label{eq:Gb1}
\langle\G_b^{1},\phi\rangle=\sum\limits_{k=0}^{n-1}\frac{\phi^{(k)}(1)}{(-1)^kk!}\biggl\{\frac{\Gamma(\a)}{2\Gamma(\b)}
{}_{p+2}F_{p+1}\!\left(\begin{matrix}-k/2,-k/2+1/2,\a\\1/2,\b\end{matrix}\right)
\\
-\frac{k\Gamma(\a+1/2)}{2\Gamma(\b+1/2)}
{}_{p+2}F_{p+1}\!\left(\begin{matrix}-k/2+1,-k/2+1/2,\a+1/2\\ 3/2,\b+1/2\end{matrix}\right)\biggr\}
\\
+\frac{(-1)^n}{2^n}\int_0^1
G^{p,2}_{p+2,p+2}\left(t^2\left|\begin{array}{l}\!\!n/2,(n+1)/2,\b+(n-1)/2\!\!\\\!\!\a+(n-1)/2,0,1/2\!\!\end{array}\right.\right)\phi^{(n)}(t)dt.
\end{multline}
If $n=0$ the finite sum in \textup{(\ref{eq:Gb1})} is understood to be empty,
so that \textup{(\ref{eq:Gb1})} reduces to  a multiple of \textup{(\ref{eq:Gsquare-int})}.
}

\smallskip

An argument similar to that given in the previous section shows that $\G_b^{1}$ is a continuous linear functional
on $\CB^{\infty}[0,1]$, whose definition is independent of $n$.  Furthermore, $\langle\G_b^{1},\phi\rangle$ coincides
with the analytic continuation of (\ref{eq:Gsquare-int}) in the parameters $\a$ and $\b$.

Using
$$
\frac{\partial^k}{\partial{u}^k}\cos(zu)=z^k\cos(zu+\pi{k/2})
$$
and setting $\a=(\hat{\a},1/2)$, we then obtain
\begin{multline}\label{eq:p-1Fpsecond}
{}_{p-1}F_p\left.\left(\begin{matrix}\hat{\a}\\\b\end{matrix}\:\right\vert -z^2/4\right)
=\sum\limits_{k=0}^{n-1}\frac{z^k\cos(z+\pi{k/2})}{(-1)^kk!}\biggl\{
{}_{p+2}F_{p+1}\!\left(\begin{matrix}-k/2,-k/2+1/2,\a\\1/2,\b\end{matrix}\right)
\\
-k\frac{\Gamma(\b)\Gamma(\a+1/2)}{\Gamma(\a)\Gamma(\b+1/2)}
{}_{p+2}F_{p+1}\!\left(\begin{matrix}-k/2+1,-k/2+1/2,\a+1/2\\ 3/2,\b+1/2\end{matrix}\right)\biggr\}
\\
+\frac{(-z)^n\Gamma(\b)}{2^{n-1}\Gamma(\a)}\int_0^1
G^{p,2}_{p+2,p+2}\left(t^2\left|\begin{array}{l}\!\!n/2,(n+1)/2,\b+(n-1)/2\!\!\\\!\!\a+(n-1)/2,0,1/2\!\!\end{array}\right.\right)\cos(zt+\pi{n/2})dt.
\end{multline}

We will use a particular case of (\ref{eq:p-1Fpsecond}) to extract some information about the zeros of the function on the left hand side.  In order to do his,
we need to recall some facts regarding the positivity of the Meijer-N{\o}rlund function $G^{p,0}_{p,p}$. We follow \cite[Property~9]{KLMeijer1}.
The inequality
$$
G^{p,0}_{p,p}\!\left(\!x~\vline\begin{array}{l}\b\\\a\end{array}\!\!\right)\ge0~~\text{for}~~0<x<1
$$
holds if $v_{\a,\b}(t)=\sum_{j=1}^{p}(t^{a_j}-t^{b_j})\ge0$ for $t\in[0,1]$. See \cite[Theorem~2]{KarpJMS} for a proof of this fact and \cite[Section~2]{KPCMFT} for further details.
Note also that $v_{\a,\b}(t)\ge0$ implies that $\psi=\sum_{j=1}^{p}(b_j-a_j)\ge0$.  For given $\a$, $\b$ the inequality $v_{\a,\b}(t)\ge0$ is not easy to verify other than numerically.
However, several sufficient conditions for $v_{\a,\b}(t)\ge0$ expressed directly in terms of $\a$, $\b$ are known.
In particular, according to  \cite[Theorem~10]{Alzer} $v_{\a,\b}(t)\ge0$ on $[0,1]$ if
\begin{equation}\label{eq:amajorb}
\begin{split}
& 0<a_1\leq{a_2}\leq\cdots\leq{a_p},~~
0<b_1\leq{b_2}\leq\cdots\leq{b_p},
\\
&\sum\limits_{i=1}^{k}a_i\leq\sum\limits_{i=1}^{k}b_i~~\text{for}~~k=1,2\ldots,p.
\end{split}
\end{equation}
These inequalities are known as the weak supermajorization and are abbreviated as $\b\!\prec^W\!\a$.   Further sufficient conditions can be found in
\cite[Property~9]{KLMeijer1} and \cite[Section~2]{KPCMFT}.

\begin{lemma} \label{lm:Gincrease}
Suppose $\alpha\ge0$, $\beta-\alpha\ge1$, $\a>\beta-1$ and $v_{\a,\b}(t)\ge0$ on $[0,1]$
\emph{(}in particular, $\b-\beta+1\prec^W\a-\beta+1$ is sufficient\emph{)}. Then the function
$$
x\to G^{p,1}_{p+1,p+1}\left(x\left|\begin{array}{l}\!\!\beta,\b\\\!\!\a,\alpha\end{array}\!\!\right.\right)
$$
is positive and increasing on $(0,1)$.
\end{lemma}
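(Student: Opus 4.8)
The plan is to reduce the lemma to the Riemann--Liouville fractional integral representation of $\widehat{G}_n$ together with the positivity criterion for the Meijer--N{\o}rlund function recalled above. First I would apply the shifting property (stated right after \eqref{eq:Gn-defined}) with exponent $-\alpha$ to obtain
$$
G^{p,1}_{p+1,p+1}\!\left(x\left|\begin{array}{l}\!\!\beta,\b\\\!\!\a,\alpha\end{array}\right.\!\!\right)=x^{\alpha}G^{p,1}_{p+1,p+1}\!\left(x\left|\begin{array}{l}\!\!\nu,\b-\alpha\\\!\!\a-\alpha,0\end{array}\right.\!\!\right)=x^{\alpha}\widehat{G}_{\nu}(x),\qquad\nu:=\beta-\alpha\ge1,
$$
where the last expression is \eqref{eq:Gn-defined} with base vectors $\a-\beta+1,\b-\beta+1$ (indeed $\a-\alpha=(\a-\beta+1)+\nu-1$). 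The hypothesis $\a>\beta-1$ says precisely $\Re(\a-\beta+1)>0$, so the representation \eqref{eq:hatGnG0} applies — for the generally non-integer order $\nu\ge1$ this follows from \cite[2.24.2.2]{PBM3}, exactly as in the proof of Proposition~\ref{lm:Gnfracintegral} — and, once more by shifting,
$$
\widehat{G}_{\nu}(x)=\frac{1}{\Gamma(\nu)}\int_0^x G_0(t)(x-t)^{\nu-1}dt,\qquad G_0(t)=G^{p,0}_{p,p}\!\left(t\left|\begin{array}{l}\!\!\b-\beta\!\!\\\!\!\a-\beta\!\!\end{array}\right.\!\!\right)=t^{-\beta}G^{p,0}_{p,p}\!\left(t\left|\begin{array}{l}\!\!\b\!\!\\\!\!\a\!\!\end{array}\right.\!\!\right).
$$
Here $G_0\in L^1(0,1)$: near $0$ it is $\O(t^{a-\beta})$ with $a-\beta>-1$, and near $1$ the factor $G^{p,0}_{p,p}(t\,|\,\b;\a)$ is $\O((1-t)^{\psi-1})$ if $\psi>0$ and bounded if $\psi=0$ (and $\psi\ge0$ because $v_{\a,\b}\ge0$).

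Next I would extract positivity. The hypothesis $v_{\a,\b}(t)\ge0$ on $[0,1]$ is exactly the criterion recalled before the lemma, with upper vector $\b$ and lower vector $\a$ (the parenthetical supermajorization $\b-\beta+1\prec^W\a-\beta+1$ implies it via \cite[Theorem~10]{Alzer}, since $v_{\a-\beta+1,\b-\beta+1}(t)=t^{1-\beta}v_{\a,\b}(t)$); hence $G^{p,0}_{p,p}(t\,|\,\b;\a)\ge0$ on $(0,1)$. Being real-analytic on $(0,1)$ and not identically zero (we exclude the trivial case $\a=\b$, where the displayed $G$-function vanishes), it is strictly positive off a discrete set; since also $t^{-\beta}>0$, $\Gamma(\nu)>0$ and $(x-t)^{\nu-1}>0$ on $(0,x)$, the integral is strictly positive for each $x\in(0,1)$, and multiplying by $x^{\alpha}>0$ gives the claimed positivity.

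For monotonicity I would differentiate. Writing the representation as $\widehat{G}_{\nu}=I_{+}^{\nu}G_0$ and using the semigroup identity $I_{+}^{\nu}=I_{+}^{1}I_{+}^{\nu-1}$ (a one-line Fubini computation, valid because $G_0\in L^1(0,1)$ and $\nu\ge1$), one sees that $\widehat{G}_{\nu}$ is an antiderivative of $I_{+}^{\nu-1}G_0$, so
$$
\frac{d}{dx}\widehat{G}_{\nu}(x)=\bigl[I_{+}^{\nu-1}G_0\bigr](x)=\frac{1}{\Gamma(\nu-1)}\int_0^x G_0(t)(x-t)^{\nu-2}dt\ \ge\ 0\qquad(x\in(0,1)),
$$
with the right-hand side meaning $G_0(x)\ge0$ when $\nu=1$ — this is the remark after Proposition~\ref{lm:Gnfracintegral}, which extends verbatim to non-integer order. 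Hence
$$
\frac{d}{dx}\bigl[x^{\alpha}\widehat{G}_{\nu}(x)\bigr]=\alpha x^{\alpha-1}\widehat{G}_{\nu}(x)+x^{\alpha}\frac{d}{dx}\widehat{G}_{\nu}(x)>0\quad\text{on }(0,1),
$$
because $\alpha\ge0$, $x^{\alpha-1}>0$, $\widehat{G}_{\nu}(x)>0$, and at least one summand is strictly positive (the second one when $\nu>1$; when $\nu=1$ one invokes $\alpha>0$, or, if also $\alpha=0$, that $G_0>0$ off a discrete set). I expect the only genuine work to be the bookkeeping in the first paragraph — fixing the shift and the base vectors so that the hypotheses $\a>\beta-1$ and $\beta-\alpha\ge1$ line up with the conditions needed for \eqref{eq:hatGnG0} and for convergence of the two fractional integrals — together with noting that those integer-index identities survive at the generally non-integer order $\nu=\beta-\alpha$.
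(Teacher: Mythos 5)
Your proposal is correct and follows essentially the same route as the paper: shift out $x^{\alpha}$, recognize the remaining $G^{p,1}_{p+1,p+1}$ as the Riemann--Liouville integral of order $\gamma=\beta-\alpha\ge1$ of the nonnegative Meijer--N{\o}rlund function $G^{p,0}_{p,p}\bigl(t\,\vert\,\b-\beta;\a-\beta\bigr)$ via \cite[2.24.2.2]{PBM3}, and read off positivity and monotonicity. The only differences are cosmetic: the paper gets monotonicity directly from $\gamma-1\ge0$ (the kernel $(x-t)^{\gamma-1}$ and the integration range both grow with $x$) rather than by differentiating with the semigroup identity, and your extra care about strictness and the degenerate case $\a=\b$ is a harmless refinement the paper omits.
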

\begin{proof} Set $\gamma=\beta-\alpha\ge1$, $\b'=\b-\beta$, $\a'=\a-\beta$. We have
$$
G^{p,1}_{p+1,p+1}\left(x\left|\begin{array}{l}\!\!\beta,\b\\\!\!\a,\alpha\end{array}\!\!\right.\right)
=x^{\alpha}G^{p,1}_{p+1,p+1}\left(x\left|\begin{array}{l}\!\!\gamma,\b'+\gamma\\\!\!\a'+\gamma,0\end{array}\!\!\right.\right)
=\frac{x^{\alpha}}{\Gamma(\gamma)}\int_0^x(x-t)^{\gamma-1}G^{p,0}_{p,p}\left(t\left|\begin{array}{l}\!\!\b'\\\!\!\a'\end{array}\!\!\right.\right)dt
$$
according to \cite[2.24.2.2]{PBM3}.  The nonnegativity of the $G$ function in the integrand combined with $\gamma-1\ge0$ completes the proof.
\end{proof}

\begin{theorem}\label{th:p-1Fpestimate}
Let $\hat{\a}$, $\b$ be positive vectors. Set $\a=(\hat{\a},1/2)$ and assume that $v_{\a,\b}(t)\ge0$ on $[0,1]$ \emph{(}in particular, $\b\prec^W\a$ is sufficient\emph{)}.
Then all zeros of
$$
f(z)={}_{p-1}F_p\left.\left(\begin{matrix}\hat{\a}\\\b\end{matrix}\:\right\vert -z^2/4\right)-\cos(z)
$$
are real and simple.  Each interval $(\pi,2\pi)$, $(2\pi,3\pi)$ contains exactly one zero of $f$ and there are no other zeros  except the trivial one at $z=0$.
\end{theorem}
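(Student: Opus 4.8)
The plan is to express $f$ as $z$ times the sine transform of a positive increasing function and then read off the zeros from an elementary sign count. First I would specialise formula (\ref{eq:p-1Fpsecond}) to $n=1$. Its finite sum then reduces to the single $k=0$ term; since the first ${}_{p+2}F_{p+1}$ in the brace has a vanishing upper parameter (so equals $1$) and the second is multiplied by $k$, that term is exactly $\cos z$. Transferring it to the left side and using $\cos(zt+\pi/2)=-\sin(zt)$ gives
\[
f(z)=\frac{z\,\Gamma(\b)}{\Gamma(\a)}\int_0^1 G^{p,2}_{p+2,p+2}\!\left(t^2\left|\begin{array}{l}\!\!1/2,1,\b\!\!\\\!\!\a,0,1/2\!\!\end{array}\right.\right)\sin(zt)\,dt .
\]
In the Mellin--Barnes integrand of this $G^{p,2}_{p+2,p+2}$ the two $\Gamma(1/2-s)$ cancel and $\Gamma(-s)/\Gamma(1-s)=-1/s$, so the kernel equals $\widehat{G}_1(t^2)$ with $\widehat{G}_1$ as in (\ref{eq:Gn-defined}); by (\ref{eq:hatGnG0}) one has $\widehat{G}_1(t^2)=\int_0^{t^2}G_0(y)\,dy$. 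Since $\a=(\hat{\a},1/2)$ is a positive vector, Lemma~\ref{lm:Gincrease} applies with $\beta=1$, $\alpha=0$ --- its nontrivial hypotheses being precisely $\a>0$ and $v_{\a,\b}\ge0$ --- and shows that $\Phi(t):=\widehat{G}_1(t^2)$ is positive and strictly increasing on $(0,1)$, with $\Phi(0)=0$ and $0<\Phi(1^-)<\infty$. Hence $f(z)=c\,z\int_0^1\Phi(t)\sin(zt)\,dt$ with $c=\Gamma(\b)/\Gamma(\a)>0$.

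Next I would record the elementary properties of $f$: it is even, real on $\R$ and entire; it has exponential type exactly $1$ (Paley--Wiener, since $\Phi$ does not vanish on any left neighbourhood of $t=1$); and it is bounded on $\R$, because integration by parts gives $f(x)=c\bigl[\int_0^1\Phi'(t)\cos(xt)\,dt-\Phi(1)\cos x\bigr]$ with $\Phi'\ge0$, whence $|f(x)|\le 2c\,\Phi(1)$. For $z\in(0,\pi]$ we have $\sin(zt)>0$ for $t\in(0,1)$, so $f(z)>0$; thus $z=0$ is the only zero of $f$ in $[0,\pi]$. For $k\ge1$, partitioning $[0,1]$ into the subintervals $[(j-1)/k,j/k]$ and rescaling each onto $[0,1]$ gives $\int_0^1\Phi(t)\sin(k\pi t)\,dt=\sum_{j=1}^k(-1)^{j-1}I_j$ with $I_j=\tfrac1k\int_0^1\Phi\bigl(\tfrac{j-1+\tau}{k}\bigr)\sin(\pi\tau)\,d\tau$; strict monotonicity of $\Phi$ forces $0<I_1<I_2<\dots<I_k$, and an alternating sum of a strictly increasing positive finite sequence has the sign of $(-1)^{k-1}$, so $\operatorname{sgn}f(k\pi)=(-1)^{k+1}\ne0$. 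Consequently $f$ changes sign on each interval $(k\pi,(k+1)\pi)$, $k\ge1$, and has at least one zero there; in particular $f$ has infinitely many real zeros, one in each such interval.

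The remaining --- and genuinely hard --- step is to upgrade ``at least one zero'' to ``exactly one'', to rule out non-real zeros, and to get simplicity. I would attempt this by a Hermite--Biehler argument for $h(z)=\int_0^1\Phi(t)\sin(zt)\,dt$: write $h=\tfrac1{2i}(\Omega-\Omega^{*})$ for a suitable entire $\Omega$, conveniently built from $\Phi(1-\cdot)$ (which is nonincreasing and vanishes at $1$), and prove the strict domination $|\Omega(z)|>|\Omega^{*}(z)|$ on the open upper half-plane; this forces every zero of $h$, hence of $f$, to be real, and combined with the sign pattern above it pins down exactly one simple zero in each $(k\pi,(k+1)\pi)$. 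An equivalent formulation is to show that $z\mapsto {}_{p-1}F_p(\hat{\a};\b;-z^2/4)/\cos z$ is a Nevanlinna (Herglotz) function, so that $f=\cos z\cdot\bigl({}_{p-1}F_p(\hat{\a};\b;-z^2/4)/\cos z-1\bigr)$ can vanish off the real axis only where $\cos z$ does, i.e. nowhere. The principal obstacle is exactly this half-plane domination (equivalently, this Herglotz property): the naive reverse-triangle estimate of $|\Omega|$ against $|\Omega^{*}|$ is too crude, and one must exploit the precise monotonicity of $\Phi$ --- equivalently the nonnegativity of $G^{p,0}_{p,p}$ guaranteed by $v_{\a,\b}\ge0$ --- to secure it. Everything preceding this step (the reduction via (\ref{eq:p-1Fpsecond}) and Lemma~\ref{lm:Gincrease}, and the sign of $f$ on $(0,\pi]$ and at the $k\pi$) is routine given the results already established in the paper.
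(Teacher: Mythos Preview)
Your reduction is exactly what the paper does: specialise (\ref{eq:p-1Fpsecond}) to $n=1$, observe that the $k=0$ term is $\cos z$, use $\cos(zt+\pi/2)=-\sin(zt)$, and note that the coincidence of the upper parameter $1/2$ with the lower parameter $1/2$ collapses $G^{p,2}_{p+2,p+2}$ to $G^{p,1}_{p+1,p+1}\bigl(t^{2}\,\big|\,1,\b;\a,0\bigr)=\widehat{G}_1(t^{2})$. Lemma~\ref{lm:Gincrease} with $\alpha=0$, $\beta=1$ then supplies positivity and monotonicity of the kernel. Up to this point you and the paper agree line for line.

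The divergence is in the last step. The paper does not attempt anything like your Hermite--Biehler programme; it simply observes that the kernel is positive, increasing, and not a step function, and invokes \cite[Theorem~2.2.2]{Sedlet2005}, which furnishes precisely the stated conclusion about the zeros of $z\int_0^1\Phi(t)\sin(zt)\,dt$ for such $\Phi$. So the ``genuinely hard'' obstacle you identify --- the half-plane domination needed for Hermite--Biehler --- is real, and you have not overcome it; but the paper avoids it entirely by citing Sedletskii rather than reproving the result. Your partial arguments (positivity on $(0,\pi]$, the alternating sign of $f(k\pi)$ via the rescaled alternating sum, the Paley--Wiener type and boundedness on $\R$) are correct and pleasant, but they are neither used nor needed once Sedletskii's theorem is available. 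In short: the proposal is incomplete as written, and the missing piece is exactly the content of the external reference the paper relies on.
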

\begin{proof}  Using (\ref{eq:p-1Fpsecond}) for $n=1$ we obtain
$$
{}_{p-1}F_p\left.\left(\begin{matrix}\hat{\a}\\\b\end{matrix}\:\right\vert -z^2/4\right)
=\cos(z)
+\frac{z\Gamma(\b)}{\Gamma(\a)}\int_0^1
G^{p,1}_{p+1,p+1}\left(t^2\left|\begin{array}{l}\!\!1,\b\!\!\\\!\!\a,0\!\!\end{array}\right.\right)\sin(zt)dt,
$$
where $\a=(\hat{\a},1/2)$.  According to Lemma~\ref{lm:Gincrease} the $G$ function in the integrand is positive
and increasing and is  obviously not a step function. The claim now follows by \cite[Theorem~2.2.2]{Sedlet2005}.
\end{proof}

In a nice recent paper \cite{ChoYun} the authors found the exact range of positive parameters $\alpha$, $\beta_1$, $\beta_2$ that ensure the inequality
${}_1F_{2}(\alpha;\beta_1,\beta_2;x)\ge0$ for all real $x$.  This range can be described as follows: for $\alpha>0$ let $P_{\alpha}$ denote  the convex hull
of the points $(\alpha_m,\infty)$, $(\alpha_m,\alpha_M)$, $(\alpha_M,\alpha_m)$, $(\infty,\alpha_m)$ in the plane $(\beta_1,\beta_2)$,
where $\alpha_m=\min(2\alpha,\alpha+1/2)$,  $\alpha_M=\max(2\alpha,\alpha+1/2)$.
Then ${}_1F_{2}(\alpha;\beta_1,\beta_2;x)\ge0$ for $\alpha,\beta_1,\beta_2>0$  iff $(\beta_1,\beta_2)\in{P_{\alpha}}$.  In the final section the authors extend their results
to the Bessel type generalized hypergeometric function ${}_{p-1}F_{p}$.  Their extension theorem can be strengthen as follows.

\begin{theorem}\label{th:ChoYun}
Suppose that $\alpha>0$,  $(\beta_1,\beta_2)\in{P_{\alpha}}$, $\a,\b\in\R^{p-1}$ with $\a>0$ and $v_{\a,\b}(t)\ge0$ on $[0,1]$
\emph{(}in particular, $\b\prec^W\a$ is sufficient\emph{)}.
Then
$$
{}_{p}F_{p+1}\left.\left(\begin{matrix}\alpha,\a\\\beta_1,\beta_2,\b\end{matrix}\:\right\vert x\right)\ge0
$$
for all $x\in\R$.
\end{theorem}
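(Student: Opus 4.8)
The plan is to derive the inequality from the elementary integral representation of the Bessel‑type function ${}_pF_{p+1}$ as an average of the kernel ${}_1F_2(\alpha;\beta_1,\beta_2;\cdot)$ against the Meijer--N{\o}rlund function $G^{p-1,0}_{p-1,p-1}$, and then to check that every factor of the integrand is nonnegative under the stated hypotheses. The two external ingredients are Cho--Yun's criterion, namely $(\beta_1,\beta_2)\in P_{\alpha}\Rightarrow{}_1F_2(\alpha;\beta_1,\beta_2;x)\ge0$ for all real $x$, and the positivity criterion for $G^{p,0}_{p,p}$ recalled just before Lemma~\ref{lm:Gincrease}; both may be invoked as stated.

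First I would dispose of the case $p=1$, where $\a,\b$ are empty and the assertion reduces to Cho--Yun's inequality itself, so assume $p\ge2$. Since $v_{\a,\b}(t)\ge0$ on $[0,1]$, two things follow. On the one hand $\psi=\sum_{k=1}^{p-1}(b_k-a_k)\ge0$, as already observed in the text (because $v_{\a,\b}(t)\sim\psi\log(1/t)$ as $t\to1^-$). On the other hand the scaling identity $v_{\a-1,\b-1}(t)=t^{-1}v_{\a,\b}(t)$ shows $v_{\a-1,\b-1}(t)\ge0$ on $(0,1]$, so by the positivity criterion of \cite[Property~9]{KLMeijer1} (equivalently \cite[Theorem~2]{KarpJMS}) the function $t\mapsto G^{p-1,0}_{p-1,p-1}(t\,|\,\b-1;\a-1)$ is nonnegative on $(0,1)$. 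Finally $\a>0$ and $\b>0$ make $\Gamma(\a)=\prod_k\Gamma(a_k)$ and $\Gamma(\b)=\prod_k\Gamma(b_k)$ positive, and $-\b\notin\N_0$.

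Now suppose first that $\psi>0$. Then $a=\min_k\Re(a_k)>0$ and $\Re(\psi)>0$, so Theorem~\ref{th:G-hypergeom}, applied with $p$ replaced by $p-1$, with the $(p-1)$‑vectors $\a,\b$, with $u=1$, $s=2$, $\c=(\alpha)$, $\d=(\beta_1,\beta_2)$, $n=0$ and $z=-x$ (for $u\le s$ formula (\ref{eq:uFsaction}) holds for all $z\in\C$), reduces the action of $\G_1$ to an ordinary convergent integral and yields
\begin{equation*}
{}_{p}F_{p+1}\!\left.\left(\begin{matrix}\alpha,\a\\\beta_1,\beta_2,\b\end{matrix}\right\vert x\right)
=\frac{\Gamma(\b)}{\Gamma(\a)}\int_0^1{}_1F_2\!\left.\left(\begin{matrix}\alpha\\\beta_1,\beta_2\end{matrix}\right\vert xt\right)
G^{p-1,0}_{p-1,p-1}\!\left(t\left\vert\begin{matrix}\b-1\\\a-1\end{matrix}\right.\right)dt .
\end{equation*}
For every real $x$ and every $t\in[0,1]$ the argument $xt$ is real, so ${}_1F_2(\alpha;\beta_1,\beta_2;xt)\ge0$ by Cho--Yun; together with $G^{p-1,0}_{p-1,p-1}\ge0$ on $(0,1)$ and $\Gamma(\b)/\Gamma(\a)>0$ this makes the integrand nonnegative, proving the claim for $\psi>0$.

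For the boundary case $\psi=0$ I would pass to the limit from the region $\psi>0$. Fix any index $j$ and put $\b_\varepsilon=(b_1,\dots,b_{j-1},b_j+\varepsilon,b_{j+1},\dots,b_{p-1})$ with $\varepsilon>0$. Then $v_{\a,\b_\varepsilon}(t)=v_{\a,\b}(t)+t^{b_j}(1-t^{\varepsilon})\ge0$ on $[0,1]$, the associated $\psi$ equals $\varepsilon>0$, and still $\a>0$, $\b_\varepsilon>0$; hence ${}_{p}F_{p+1}(\alpha,\a;\beta_1,\beta_2,\b_\varepsilon;x)\ge0$ for all real $x$ by the previous case, and letting $\varepsilon\to0^+$ — using that ${}_{p}F_{p+1}$ is, for fixed $x$, continuous (indeed analytic) in its positive parameters — gives ${}_{p}F_{p+1}(\alpha,\a;\beta_1,\beta_2,\b;x)\ge0$. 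Alternatively, when $\psi=0$ one may argue directly from the representation $ {}_{p}F_{p+1}(\alpha,\a;\beta_1,\beta_2,\b;x)=\frac{\Gamma(\b)}{\Gamma(\a)}\bigl[{}_1F_2(\alpha;\beta_1,\beta_2;x)+\int_0^1{}_1F_2(\alpha;\beta_1,\beta_2;xt)G^{p-1,0}_{p-1,p-1}(t\,|\,\b-1;\a-1)\,dt\bigr]$, which incorporates the degree‑zero polynomial correction $q\equiv1$ in the Mellin transform (\ref{eq:GMellinNorlund}) of $G^{p-1,0}_{p-1,p-1}$ and whose right‑hand side is again manifestly nonnegative. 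The main obstacle is therefore not the positivity step, which is immediate once the two quoted criteria are in place, but setting up the integral representation correctly — in particular the shift $\b-1,\a-1$ in the Meijer--N{\o}rlund factor and the careful treatment of the endpoint $\psi=0$, where the naive $n=0$ representation picks up the correction term just displayed.
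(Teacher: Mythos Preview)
Your proof is correct and follows essentially the same route as the paper: both arguments rest on the integral representation of ${}_{p}F_{p+1}$ as an average of ${}_1F_2(\alpha;\beta_1,\beta_2;xt)$ against the nonnegative Meijer--N{\o}rlund weight, combined with Cho--Yun's positivity criterion for ${}_1F_2$. The only cosmetic differences are that you invoke the paper's own Theorem~\ref{th:G-hypergeom} with $n=0$ rather than citing \cite[Theorem~1]{KarpJMS} directly (your $G^{p-1,0}_{p-1,p-1}(t\,|\,\b-1;\a-1)\,dt$ is of course the same as the paper's $G^{p-1,0}_{p-1,p-1}(t\,|\,\b;\a)\,dt/t$ by the shifting property), and that for $\psi=0$ you give a limiting argument as your primary option before mentioning the corrected representation, whereas the paper goes straight to the latter.
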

\begin{proof}
Indeed, according to \cite[Theorem~1]{KarpJMS}
$$
{}_{p}F_{p+1}\left.\left(\begin{matrix}\alpha,\a\\\beta_1,\beta_2,\b\end{matrix}\:\right\vert x\right)
=\frac{\Gamma(\b)}{\Gamma(\a)}\int_0^1\!\!\!{}_{1}F_{2}\left.\left(\begin{matrix}\alpha\\\beta_1,\beta_2\end{matrix}\:\right\vert xt\right)
G^{p-1,0}_{p-1,p-1}\left(t\left|\begin{array}{l}\!\b\!\\\!\a\!\end{array}\right.\right)\frac{dt}{t}
$$
for $\a>0$ and $\psi=\sum_k(b_k-a_k)>0$,  and
$$
{}_{p}F_{p+1}\left.\left(\begin{matrix}\alpha,\a\\\beta_1,\beta_2,\b\end{matrix}\:\right\vert x\right)
=\frac{\Gamma(\b)}{\Gamma(\a)}\left[
{}_{1}F_{2}\left.\left(\begin{matrix}\alpha\\\beta_1,\beta_2\end{matrix}\:\right\vert x\right)
+\int_0^1\!\!\!{}_{1}F_{2}\left.\left(\begin{matrix}\alpha\\\beta_1,\beta_2\end{matrix}\:\right\vert xt\right)
G^{p-1,0}_{p-1,p-1}\left(t\left|\begin{array}{l}\!\b\!\\\!\a\!\end{array}\right.\right)\frac{dt}{t}\right]
$$
for $\a>0$ and $\psi=0$.  Hence, the claim follows \cite[Theorem~6.1]{ChoYun} in view of nonnegativity of the $G$ function
in the integrand valid according to \cite[Theorem~2]{KarpJMS} or \cite[Property~9]{KLMeijer1}.
\end{proof}

\bigskip

\paragraph{Acknowledgements.}
The research of the first author has been supported by the Russian Science Foundation under the project 14-11-00022.
The research of the second author has been supported by the Spanish \emph{Ministry
of "Econom\'{\i}a y Competitividad"} under the project MTM2014-53178  and by the {\it Universidad P\'ublica de Navarra}.

\bigskip
\bigskip

\end{document}